\renewcommand\theequation{\thesection.\arabic{equation}}
\newcommand{\BC}{{\mathbb {C}}}
\newcommand{\BZ}{{\mathbb {Z}}}
\newcommand{\CH}{{\mathcal {H}}}
\newcommand{\CJ}{{\mathcal {J}}}
\newcommand{\CW}{{\mathcal {W}}}
\newcommand{\GL}{{\mathrm{GL}}}
\newcommand{\Hom}{{\mathrm{Hom}}}
\newcommand{\Ind}{{\mathrm{Ind}}}
\newcommand{\SO}{{\mathrm{SO}}}
\newcommand{\wt}{\widetilde}
\newcommand{\ol}{\overline}
\newcommand{\bs}{\backslash}
\def\gam{{\gamma}}
\newtheorem{thm}{Theorem}[section]
\newtheorem{lem}[thm]{Lemma}
\newtheorem{prop}[thm]{Proposition}
\newtheorem {conj}[thm]{Conjecture}
\newtheorem {ques/conj}[thm]{Question/Conjecture}
\newtheorem{defn}[thm]{Definition}
\newtheorem*{Jconj}{Conjecture~$\bJ(\textbf{\textit{n}},\br)$}
\newcommand{\Rmnum}[1]{\expandafter\@slowromancap\romannumeral #1@}
\def\bJ{{\boldsymbol{\CJ}}}
\def\bN{{\boldsymbol{N}}}
\def\br{{\boldsymbol{r}}}
\begin{document}
\renewcommand{\theequation}{\arabic{equation}}
\numberwithin{equation}{section}

\title[Local Converse Theorem]{On the Local Converse Theorem for $p$-adic $\GL_n$}

\author{Herv\'e Jacquet}
\address{Department of Mathematics\\
Columbia University\\
Rm 615, MC 4408 2990 Broadway\\
New York, NY 10027 USA}
\email{hj@math.columbia.edu}

\author{Baiying Liu}
\address{Department of Mathematics\\
Purdue University\\
150 N. University Street\\
West Lafayette, IN 47907 USA}
\email{liu2053@purdue.edu}

\begin{abstract}
In this paper, we completely prove a standard conjecture on the local converse theorem for generic representations of $\GL_n(F)$, where $F$ is a non-archimedean local field.
\end{abstract}

\date{\today}
\subjclass[2000]{Primary 11S70, 22E50; Secondary 11F85, 22E55.}
\keywords{Local Converse Theorem, Generic Representations}
\thanks{The second mentioned author is supported in part by NSF Grants DMS-1302122, DMS-1620329.}
\maketitle

\section{Introduction}\label{intro}

Let $F$ be a non-archimedean local field.
Let $G_n:=\GL_n(F)$ and let $\pi$ be an
irreducible generic representation of $G_n$. The family
of local gamma factors $\gamma(s, \pi \times \tau, \psi)$, for $\tau$ any irreducible generic representation of $G_r$, $\psi$ an additive character of $F$ and $s\in\BC$, can be
defined using Rankin--Selberg convolution \cite{JPSS83} or the
Langlands--Shahidi method \cite{S84}. The following is a standard conjecture on precisely which family of gamma factors determine $\pi$. 

\begin{conj}[]\label{lcp}
Let $\pi_1,\pi_2$ be irreducible generic representations
of $G_n$. Suppose that they have the same central character.
If
\[
\gamma(s, \pi_1 \times \tau, \psi) = \gamma(s, \pi_2 \times \tau, \psi),
\]
as functions of the complex variable $s$, for all irreducible
generic representations $\tau$ of $G_r$ with $1 \leq r \leq 
[\frac{n}{2}]$, then $\pi_1 \cong \pi_2$.
\end{conj}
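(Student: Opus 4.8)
The plan is to convert the hypothesis on gamma factors into an identity of Whittaker functions and then invoke the uniqueness of the Whittaker model. Fix $\psi$-Whittaker models $\CW(\pi_1,\psi)$ and $\CW(\pi_2,\psi)$; it suffices to produce a single nonzero function lying in both, for then, being irreducible $G_n$-submodules of $C^\infty(N_n\backslash G_n,\psi)$, they coincide and $\pi_1\cong\pi_2$. By the local functional equation of Jacquet--Piatetski-Shapiro--Shalika \cite{JPSS83}, the hypothesis $\gamma(s,\pi_1\times\tau,\psi)=\gamma(s,\pi_2\times\tau,\psi)$ for every irreducible generic $\tau$ of $G_r$ with $1\le r\le[\tfrac n2]$ says precisely that the $G_n\times G_r$ Rankin--Selberg integrals attached to a Whittaker function of $\pi_1$, on the two sides of the functional equation, match those attached to $\pi_2$, for all such $\tau$ and all auxiliary Schwartz data. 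Since these integrals, as $\tau$ and the data vary, test the restriction of a Whittaker function to the subgroups $\GL_r\hookrightarrow\GL_n$ and their Weyl translates, the task becomes to show that the Whittaker functions of $\pi_1$ can be matched with those of $\pi_2$ on a large enough open set.

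Second, I would pass to partial Bessel functions in the style of Cogdell--Piatetski-Shapiro. For $W_i\in\CW(\pi_i,\psi)$ and a test function $\varphi$ concentrating at the identity, the partial Bessel function $B_{W_i,\varphi}$ converges to the Bessel function $B_{\pi_i,\psi}$, the $(N_n,\psi)$-bi-equivariant function on the relevant Bruhat cells that governs the behavior of Whittaker functions near the long Weyl element. Unwinding the Rankin--Selberg integrals for $G_n\times G_r$, the gamma-factor hypothesis for $\tau$ on $G_r$ forces equality of certain partial Bessel integrals of $\pi_1$ and $\pi_2$; after invoking stability of gamma factors under highly ramified twists to absorb the contribution of the diagonal torus outside a bounded set, this amounts to $B_{\pi_1,\psi}=B_{\pi_2,\psi}$ on a neighborhood of a specific family $\ScS_r$ of Weyl elements -- those seen by the $G_n\times G_r$ integral, among them the $w_{n,j}=\begin{pmatrix}&I_j\\ I_{n-j}&\end{pmatrix}$ with $j\le r$. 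The contragredient relation $\gamma(s,\pi\times\tau,\psi)\leftrightarrow\gamma(s,\tilpi\times\tiltau,\psi^{-1})$ together with $\tilpi\cong{}^{\iota}\pi$ enlarges $\ScS_r$ by its transpose-inverse conjugate family, but still does not directly reach the interior of the Weyl group.

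Third -- the heart of the matter -- I would propagate $B_{\pi_1,\psi}=B_{\pi_2,\psi}$ from the base family $\ScS_{[n/2]}$ to a neighborhood of every Weyl element that contributes to the Bessel function. Ordering the Weyl group of $G_n$ by the Bruhat order (or a variant adapted to the supports of partial Bessel functions), I would argue by descending induction: granted $B_{\pi_1,\psi}=B_{\pi_2,\psi}$ near every Weyl element strictly above a given $w$, the functional equation for one well-chosen twist by some $G_r$ with $r\le[\tfrac n2]$ expresses $B_{\pi_1,\psi}-B_{\pi_2,\psi}$ near $w$, via a germ expansion of partial Bessel functions and an integration over a unipotent radical, in terms of the already-controlled values further up. The combinatorial crux is that this descent, seeded by $\ScS_{[n/2]}$, reaches every relevant $w$: this is exactly where $[\tfrac n2]$, rather than a smaller bound, is needed, and where pairing a cell with its transpose-inverse partner covers the interior range.

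The main obstacle is this last step: one must control the support and germ behavior of partial Bessel functions near an arbitrary Weyl element sharply enough to isolate the genuinely new information contributed at each stratum, and then prove the combinatorial assertion that the strata attached to $r\le[\tfrac n2]$ seed a descent reaching all of them. Once $B_{\pi_1,\psi}=B_{\pi_2,\psi}$ holds on the whole relevant locus, the restriction near the big cell of any $W_1\in\CW(\pi_1,\psi)$ is matched by a Whittaker function of $\pi_2$; since the two representations share a central character, a density argument propagates the matching from a neighborhood of the mirabolic to all of $G_n$, producing the required common nonzero Whittaker function and hence $\pi_1\cong\pi_2$, which proves the conjecture.
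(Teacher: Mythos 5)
Your proposal and the paper diverge almost immediately, and the divergence matters.

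You take the partial-Bessel-function route of Cogdell--Piatetski-Shapiro: unwind the Rankin--Selberg integrals into partial Bessel integrals, use stability under highly ramified twists to localize, and then propagate equality of Bessel functions across Bruhat cells by a descent in the Weyl group. The paper does something quite different and more elementary. Starting from the Gelfand--Kazhdan fact that both restrictions $\CW(\pi_i,\psi)|_P$ contain $\mathrm{ind}_{ZU}^P\omega\psi_U$, it fixes $v\in V_0=\mathrm{ind}_{ZU}^P\omega\psi_U$ and the corresponding $W^1_v,W^2_v$ which agree on $P$. It then decomposes $G_n=\dot\bigcup_{i=0}^{n-1}U\alpha^iP$ with $\alpha=\begin{pmatrix}0&I_{n-1}\\1&0\end{pmatrix}$, defines ``agreement at height $i$'', and shows: (a) hypothesis $\CH_t$ alone gives agreement at height $t$ (Chen's lemma); (b) the new step, Proposition~\ref{prop3}, that agreement at heights $0,\dots,t$ for $[\frac n2]\le t\le n-2$, together with $\CH_{\le[\frac n2]}$, forces agreement at height $t+1$. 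Step (b) is proved by conjugating the height-$t$ matrices to the shape needed by the $G_n\times G_{n-t-1}$ zeta integral, applying the functional equation \eqref{fe} (permitted since $n-t-1\le[\frac n2]$), and then running the genuinely novel Lemma~\ref{lem3}: a carefully organized sequence of Fourier inversions on root subgroups $X_a$ and $Z_b$ of a unipotent block, exploiting that the functions $X^i_v$ are equivariant under $U\cap\alpha^{t+1}P\alpha^{-(t+1)}$.

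The genuine gap in your proposal is your own ``third step,'' which you candidly flag as the main obstacle. You assert that equality of Bessel germs on the family $\ScS_{[n/2]}$ seeded by the available twists, enlarged by the contragredient symmetry, propagates via a Bruhat-order descent to every relevant Weyl element. Nothing in the proposal actually carries out this descent; you observe that the contragredient trick ``still does not directly reach the interior of the Weyl group,'' and no mechanism is offered that does. This is precisely the obstruction that left the Bessel-function method stuck at the bound $n-2$ for years. Overcoming it is not a combinatorial bookkeeping exercise: in the paper it requires the new Fourier-inversion argument of Lemma~\ref{lem3} (and in Chai's independent Bessel-function proof, a different and equally nontrivial ingredient). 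In addition, your appeal to stability of gamma factors under highly ramified twists is dubious in this setting: stability arguments normally use twists of high rank or high conductor, whereas here you are restricted to $\GL_r$ with $r\le[\frac n2]$, and it is not explained how stability interacts with that constraint. As written, the proposal is a plausible program outline but not a proof; the hard step is named, not closed.
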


The conjecture and the global version of the conjecture (\cite[Section 8, Conjecture 1]{CPS99}) emerged from early discussions between Piatetski-Shapiro, Shalika and the first mentioned author. In particular, they proved Conjecture \ref{lcp} in the case $n=3$ (\cite{JPSS79}).

The fact that the representations have the same central character implies that if, for a given $r$, the above equality is true for one choice of  $\psi$, then it is true for all choices of $\psi$. Moreover, if the above equality is true for $r = 1$ and one choice of $\psi$, then the representations have the same central character (\cite[Corollary 2.7]{JNS15}).

One can propose a more general family of conjectures as follows (see \cite{ALSX16}).
We say that $\pi_1$ and $\pi_2$ satisfy hypothesis $\CH_0$
if they have the same central character. 
For $m \in \BZ_{\geq 1}$, we say that they satisfy hypothesis $\CH_m$ if they satisfy hypothesis $\CH_0$ and satisfy
$$\gam(s,\pi_1\times\tau,\psi)=\gam(s,\pi_2\times\tau,\psi)$$
as functions
of the complex variable $s$, for all irreducible generic
representations $\tau$ of $G_m$.
For $r \in \BZ_{\geq 0}$, we say that $\pi_1,\pi_2$ satisfy hypothesis $\CH_{\le r}$ if they satisfy hypothesis $\CH_m$,
for $0\le m\le r$.

\begin{Jconj}
If $\pi_1,\pi_2$ are irreducible generic representations
of $G_n$ which satisfy hypothesis $\CH_{\le r}$, then $\pi_1\simeq\pi_2$.
\end{Jconj}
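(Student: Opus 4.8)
The plan is to induct on $n$, the base cases $n=1,2$ being classical (for $n=1$ the hypothesis $\CH_0$ already forces $\pi_1\simeq\pi_2$, and $n=2$ is handled by the $\GL_2$ theory). For the inductive step, the natural strategy is the one pioneered by Henniart and refined by Chen, Cogdell, Nien, and others: realize both $\pi_i$ inside their Whittaker models $\CW(\pi_i,\psi)$ and show that the equality of gamma factors for $r\le[\tfrac n2]$ forces the Whittaker functions to agree on a large enough subgroup — specifically on the mirabolic subgroup $P_n$ — whence $\pi_1\simeq\pi_2$ by the uniqueness of the Whittaker model and the fact that $P_n$ together with the center generates $G_n$ modulo the relevant equivalence. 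So the crux is to produce, for a given Whittaker function $W_1\in\CW(\pi_1,\psi)$, a matching $W_2\in\CW(\pi_2,\psi)$ with $W_1|_{P_n}=W_2|_{P_n}$.

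The key steps, in order, would be: (1) Recall the local Rankin--Selberg integrals $\Psi(s;W_i,W'_\tau)$ and $\widetilde\Psi(s;W_i,W'_\tau)$ attached to $W_i\in\CW(\pi_i,\psi)$ and $W'_\tau\in\CW(\tau,\psi^{-1})$, and the functional equation relating them through $\gamma(s,\pi_i\times\tau,\psi)$. Equality of the gamma factors then gives, for all $\tau$ of rank $\le[\tfrac n2]$ and all choices of data, an identity between bilinear forms built from $\pi_1$ and from $\pi_2$. (2) Introduce the "sections" or Howe vectors (partial Bessel functions) adapted to $\psi$: average $W_i$ against a highly oscillatory character on a large compact subgroup to concentrate support near the identity in $P_n$. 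The point is that for such test vectors the Rankin--Selberg integral localizes, and the functional equation degenerates into a direct comparison of the values of $W_1$ and $W_2$ on cells of the Bruhat decomposition of $P_n$. (3) Run an inductive argument over these Bruhat cells, indexed by the Weyl group: show cell by cell, starting from the big cell, that $W_1$ and $W_2$ agree, using at each stage the already-established agreement on larger cells plus the gamma-factor identity for a suitably chosen $\tau$ of rank $\le[\tfrac n2]$ (this is exactly why twisting up to $[\tfrac n2]$, and no further, suffices — a "long word" decomposition keeps the rank of the needed $\tau$ at most half of $n$). (4) Conclude that $W_1|_{P_n}=W_2|_{P_n}$, hence by the standard argument $\pi_1\simeq\pi_2$.

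The main obstacle is step (3): controlling the partial Bessel functions well enough to carry out the cell-by-cell induction. One must show that the relevant "germ expansion" of the partial Bessel function near each Weyl element is governed by lower-rank data to which the inductive hypothesis or the rank-$\le[\tfrac n2]$ twists apply, and in particular one must handle the cells where a naive choice of $\tau$ would have rank larger than $[\tfrac n2]$ — here one exploits the structure of the Weyl group and a clever choice of the element $w$ (and an accompanying "exchange of roots" / conjugation argument) to reduce the effective rank. A secondary technical point is verifying that the space of test vectors produced by the Howe-vector averaging is rich enough: one needs enough oscillatory averages to separate all the Bruhat cells, which requires a careful choice of the compact subgroups and characters and a stationary-phase-type estimate showing the integrals are nonvanishing. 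Once these local analytic inputs are in place, the deduction $\pi_1\simeq\pi_2$ is formal.
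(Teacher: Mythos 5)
Your high-level plan correctly identifies the arena (Whittaker models, the mirabolic subgroup $P$, Rankin--Selberg functional equations, twists of rank $\le[\tfrac n2]$), but it stops precisely at the point where the argument has to be made to work, and the route you gesture toward is in fact a different one from the paper's. Concretely: you propose averaging against Howe vectors to build partial Bessel functions and then running a ``germ expansion'' induction over Bruhat cells, acknowledging that ``controlling the partial Bessel functions well enough to carry out the cell-by-cell induction'' is the main obstacle. That sentence names the whole problem; nothing in the proposal shows how the obstacle would be overcome. In fact, partial Bessel functions are the mechanism of Chai's independent proof (cited in the paper), not the paper's. The paper instead avoids Bessel functions entirely: it uses the Gelfand--Kazhdan result (Proposition~\ref{prop1}) that $\CW(\pi_i,\psi)|_P$ contains the compactly induced model $V_0=\mathrm{ind}_{ZU}^P\omega\psi_U$, so one gets a common test space at once — no oscillatory averaging, no stationary-phase estimate, no nonvanishing lemma for Howe vectors. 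For each $v\in V_0$ there are $W_v^1,W_v^2$ agreeing on $P$ automatically, and the problem becomes to propagate this agreement to all of $G_n$.

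There are two more specific gaps. First, you frame the argument as an induction on $n$ with base cases $n=1,2$; the paper's argument is not an induction on $n$ at all. It is an induction on the ``height'' $t$ within a single $G_n$: the double coset decomposition $G_n=\dot\bigcup_{i=0}^{n-1}U\alpha^i P$ (with $\alpha$ a fixed cycle) replaces your ``Bruhat cells of $P_n$,'' and the induction moves from low heights to high. Second — and this is the heart of the matter — you do not pin down \emph{why} $[\tfrac n2]$ is the threshold. You say ``a `long word' decomposition keeps the rank of the needed $\tau$ at most half of $n$,'' but that is a slogan, not an argument. In the paper the mechanism is explicit and elementary: Chen's lemma gives agreement at heights $1,\dots,[\tfrac n2]$ directly; then, for $[\tfrac n2]\le t\le n-2$, agreement at heights $0,\dots,t$ yields (Lemma~\ref{lem2}) a block-matrix identity, and conjugating by $\omega_n$ and $\alpha^{t+1}$ turns this into a comparison of the ``dual'' zeta integrals $\Psi(1-s,\cdot,\cdot;2t+2-n)$ for twists of rank $n-t-1$, which is $\le[\tfrac n2]$ precisely because $t\ge[\tfrac n2]$. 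The functional equation then passes this back to $\Psi(s,\cdot,\cdot;n-t-2)$, and a multi-stage Fourier inversion on root subgroups (Lemma~\ref{lem3} — a genuinely new technical step, not present in Chen's $n-2$ argument) extracts the pointwise equality $X_v^1(I_n)=X_v^2(I_n)$, i.e.\ agreement at height $t+1$. Your proposal contains no analogue of this reflection-of-rank observation or of the final Fourier-inversion step, and without them the plan does not close.
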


Conjecture \ref{lcp} is exactly Conjecture $\CJ(n,[\frac{n}{2}])$. Henniart proved Conjecture $\CJ(n,n-1)$ in \cite{H93}.  Conjecture $\CJ(n,n-2)$ (for $n \geq 3$) is a theorem due
to Chen \cite{Ch96, Ch06}, to Cogdell and Piatetski-Shapiro \cite{CPS99}, and to Hakim and Offen \cite{HO15}. As we mentioned above, Conjecture $\CJ(3,1)$ is first proved by the first mentioned author, Piatetski-Shapiro, and Shalika \cite{JPSS79}. Conjecture $\CJ(2,1)$ is first proved by the first mentioned author and Langlands \cite{JL70}.

In \cite[Section 2.4]{JNS15}, Conjecture \ref{lcp} is shown to be equivalent to the same conjecture with the adjective ``generic" replaced by ``unitarizable supercuspidal" as follows: 

\begin{conj}\label{lcp2}
Let $\pi_1,\pi_2$ be irreducible unitarizable supercuspidal representations of $G_n$. Suppose that they have the same central character. If
\[
\gamma(s, \pi_1 \times \tau, \psi) = \gamma(s, \pi_2 \times \tau, \psi),
\]
as functions of the complex variable $s$, for all irreducible
supercuspidal representations $\tau$ of $G_r$ with $1 \leq r \leq 
[\frac{n}{2}]$, then $\pi_1 \cong \pi_2$.
\end{conj}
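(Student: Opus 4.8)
The plan is to reduce the full Conjecture \ref{lcp} to Conjecture \ref{lcp2} by a standard argument on the structure of irreducible generic representations, and then to establish Conjecture \ref{lcp2} directly for supercuspidals. For the reduction, recall that any irreducible generic representation $\pi$ of $G_n$ is a full induced representation $\pi = \mathrm{Ind}_{P}^{G_n}(\Delta_1 \otimes \cdots \otimes \Delta_t)$, where the $\Delta_i$ are essentially square-integrable, each $\Delta_i$ is a Langlands quotient built from a segment of supercuspidals, and the inducing data is determined by $\pi$ up to permutation. The gamma factor $\gamma(s, \pi \times \tau, \psi)$ is multiplicative in each argument, so $\gamma(s,\pi_1\times\tau,\psi)=\gamma(s,\pi_2\times\tau,\psi)$ for all generic $\tau$ of $G_r$, $r \le [n/2]$, forces an identity of products of gamma factors attached to the supercuspidal supports; a careful bookkeeping of poles and of the exponents (using that the partition of $n$ can be read off from the analytic behavior of gamma factors against twists by characters and by supercuspidals of small rank) shows that $\pi_1$ and $\pi_2$ have the same supercuspidal support of the same shape, and the problem collapses to matching the constituent supercuspidals — i.e.\ to Conjecture \ref{lcp2}. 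This reduction is precisely what is carried out in \cite[Section 2.4]{JNS15}, which we invoke.

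For Conjecture \ref{lcp2} itself, let $\pi_1,\pi_2$ be irreducible unitarizable supercuspidal representations of $G_n$ with the same central character satisfying $\mathcal{H}_{\le [n/2]}$. The strategy is to use the Whittaker model and the theory of Bessel functions. Fix the additive character $\psi$ and let $W_{\pi_i}$ denote the normalized Whittaker function (Bessel function) of $\pi_i$ associated to $\psi$; it is a function on $G_n$ bi-invariant under the appropriate unipotent groups against $\psi$ and depending only on the Bruhat cell. The equality of central characters plus $\mathcal{H}_{\le [n/2]}$ translates, via the local functional equations of \cite{JPSS83} applied to all test vectors, into an equality of certain partial Bessel integrals of $W_{\pi_1}$ and $W_{\pi_2}$ against Whittaker functions of generic representations $\tau$ of $G_r$ for $r \le [n/2]$. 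The aim is to bootstrap from these to the full equality $W_{\pi_1} = W_{\pi_2}$ on all of $G_n$, since a supercuspidal representation is determined by its Bessel function.

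The heart of the argument is an inductive analysis of the Bessel function cell by cell along the Bruhat decomposition, showing that the twisted integrals against $G_r$ for $r \le [n/2]$ already pin down $W_{\pi_i}$ on the ``large'' Bruhat cells, and then using a symmetry (the relation between $\pi$ and its contragredient $\widetilde\pi$, together with the fact that $\widetilde\pi \cong \bar\pi$ for unitary $\pi$, and the behavior of gamma factors under $\tau \mapsto \widetilde\tau$) to cover the cells indexed by Weyl elements $w$ with $w$ ``beyond the middle.'' Concretely, one sets up a family of partial Bessel functions $j_{\pi_i,w}$ attached to Weyl group elements, proves a recursion expressing $j_{\pi_i,w}$ in terms of $j_{\pi_i,w'}$ for $w'$ of smaller length plus an error controlled by the twisted gamma factors of rank $\le [n/2]$, and runs the induction; the contragredient trick is what lets $r$ stop at $[n/2]$ rather than $n-1$. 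I expect the main obstacle to be exactly this step: controlling the partial Bessel functions on the intermediate Bruhat cells — proving the right vanishing/stability of the error terms so that the recursion closes — and correctly exploiting the contragredient symmetry to halve the range of $r$. This is the genuinely new analytic input; everything before it is either classical (multiplicativity of $\gamma$, the Whittaker/Bessel dictionary) or already in \cite{JNS15}.
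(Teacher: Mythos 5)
Your proposal takes a genuinely different route from the paper, and the route you take has a major unfilled gap that you yourself flag.

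The paper obtains Conjecture~\ref{lcp2} not by a direct argument with supercuspidal Bessel functions, but by proving Conjecture~\ref{lcp} (the generic case) directly and then invoking the equivalence of \cite[Section 2.4]{JNS15}. The proof of Conjecture~\ref{lcp} in the paper works entirely inside the Whittaker model of a generic $\pi$: it restricts Whittaker functions to the mirabolic subgroup $P$, uses the double coset decomposition $G_n = \dot\bigcup_{i=0}^{n-1} U\alpha^i P$ to define ``agreement at height $i$,'' gets agreement at heights $1,\dots,[\tfrac{n}{2}]$ from the functional equation (Lemma~\ref{lem7}, borrowed from \cite{Ch06}), and then runs an upward induction on the height $t$ from $[\tfrac{n}{2}]$ to $n-1$ (Proposition~\ref{prop3}). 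The crucial new input is Lemma~\ref{lem3}, a concrete Fourier-analysis argument on the unipotent groups $X_a$, $Z_b$, using the conjugation action by $Y_a$, $T_b$ and repeated Fourier inversion, which shows that the integrated identity produced by the functional equation implies the pointwise identity $X^1_v(I_n)=X^2_v(I_n)$. The authors explicitly say they avoid special Whittaker/Bessel functions for supercuspidals, so their proof does not reduce to the supercuspidal case at all; the reduction goes in the opposite direction to the one you sketch.

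Your approach --- passing to the supercuspidal case and running an induction on partial Bessel functions along Bruhat cells, using the contragredient symmetry to halve the range of twists --- resembles Chai's independent proof \cite{Ch16} rather than this paper. But as written it is not a proof: the step you identify as the ``genuinely new analytic input'' (a recursion for $j_{\pi,w}$ in $w$ with errors controlled by $\gamma$-factors of rank $\le [\tfrac{n}{2}]$, plus a stability/vanishing statement that closes the recursion) is exactly the theorem one needs to prove, and you leave it as an expectation rather than an argument. Without a precise statement and proof of that recursion and of the claimed role of the contragredient symmetry, the deduction does not go through. Your first paragraph (reducing Conjecture~\ref{lcp} to Conjecture~\ref{lcp2} via multiplicativity of $\gamma$) is also orthogonal to the task of proving Conjecture~\ref{lcp2} itself; that reduction is useful if one \emph{already} has Conjecture~\ref{lcp2}, but it contributes nothing to establishing it.
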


In \cite{JNS15}, Jiang, Nien and Stevens introduced
the notion of a special pair of Whittaker functions for a pair of irreducible unitarizable supercuspidal representations $\pi_1$, $\pi_2$ of $G_n$. They proved that if there is such a pair, and $\pi_1$, $\pi_2$ satisfy hypothesis $\CH_{\leq [\frac{n}{2}]}$, then $\pi_1 \cong \pi_2$. They also found special pairs of Whittaker functions in many cases, in particular the case of depth zero representations. In \cite{ALSX16}, Adrian, the second mentioned author, Stevens and Xu proved part of the case left open in \cite{JNS15}.
In particular, the results in \cite{JNS15} and \cite{ALSX16} together imply that Conjecture \ref{lcp2} is true for $G_n$, $n$ prime. 
We remark that both \cite{JNS15} and \cite{ALSX16} make use of the construction of supercuspidal representations of $G_n$ in \cite{BK93} and properties of Whittaker functions of supercuspidal representations constructed in \cite{PS08}.  

In this paper we prove Conjecture 1.1, hence Conjecture 1.2. We use analytic methods. We do not resort to the construction of special pairs of Whittaker functions for supercuspidal representations.
The idea is inspired by the proof of Conjecture $\CJ(n,n-2)$ in \cite{Ch06}. We state the main result of the paper as the following theorem. 

\begin{thm}\label{main}
Conjecture \ref{lcp} is true. 
\end{thm}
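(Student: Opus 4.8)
The plan is to reduce to the supercuspidal case and then to show the two representations have the same Bessel function, by an induction on Bruhat cells in which the gamma factors are used to descend from larger cells to smaller ones.

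First I would invoke the equivalence established in \cite[Section 2.4]{JNS15}: it suffices to prove Conjecture \ref{lcp2}, so assume $\pi_1,\pi_2$ are irreducible unitarizable supercuspidal representations of $G_n$ with the same central character $\omega$ and satisfying $\CH_{\le[\frac{n}{2}]}$. Fix Whittaker models $\CW(\pi_i,\psi)$. Since the $\pi_i$ are supercuspidal, each $W\in\CW(\pi_i,\psi)$ is compactly supported modulo $Z_nN_n$, and one has the associated Bessel function $j_{\pi_i,\psi}$: a smooth function on $G_n$, $(N_n,\psi)$-equivariant on both sides, transforming under $Z_n$ by $\omega$, normalized by $j_{\pi_i,\psi}(e)=1$, compactly supported modulo $Z_nN_n$, which represents the Bessel distribution and from which the whole Whittaker model $\CW(\pi_i,\psi)$, hence $\pi_i$ itself, can be recovered. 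Thus it is enough to prove $j_{\pi_1,\psi}=j_{\pi_2,\psi}$.

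Next I would use the structure of its support. The Bessel function is supported on the union of the Bruhat cells $B_nwB_n$ for $w$ ranging over the set of \emph{relevant} Weyl elements $w\in\mathfrak{S}_n$ (those attached to compositions of $n$, for which the cell carries a nonzero Bessel germ), a set partially ordered by the closure order, with unique maximal element the long element $w_\ell$, the permutation matrix with $1$'s on the anti-diagonal. On the open cell $B_nw_\ell B_n$ the Bessel function is given by an explicit Kloosterman-type integral depending only on $\omega$, so $j_{\pi_1,\psi}=j_{\pi_2,\psi}$ there; this is the base case. Then, by downward induction, assuming agreement on all relevant cells strictly above a given relevant $w$, I would prove agreement on $B_nwB_n$. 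This inductive step is where $\CH_{\le[\frac{n}{2}]}$ enters, along the lines of Chen's proof of $\CJ(n,n-2)$ in \cite{Ch06}: for the given $w$ one selects Howe-type test vectors $W_{i,m}\in\CW(\pi_i,\psi)$, essentially normalized characteristic functions of shrinking congruence neighborhoods of a chosen representative of $B_nwB_n$, substitutes them into the Rankin--Selberg integrals $\Psi(s;W_{i,m},W'_\tau,\Phi)$ against Whittaker functions of representations $\tau$ of $G_r$ with $r\le[\frac{n}{2}]$, expands over Bruhat cells, uses the inductive hypothesis to cancel the contributions of the cells above $w$, and then reads off from the identity $\gamma(s,\pi_1\times\tau,\psi)=\gamma(s,\pi_2\times\tau,\psi)$ an integral identity forcing $j_{\pi_1,\psi}=j_{\pi_2,\psi}$ on $B_nwB_n$. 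That $r\le[\frac{n}{2}]$ is enough is a symmetry phenomenon: a relevant cell for a composition $(n_1,\dots,n_k)$ can be matched with its mirror cell, and combining this with the functional equation $\gamma(s,\pi\times\tau,\psi)\,\gamma(1-s,\widetilde\pi\times\widetilde\tau,\psi^{-1})=1$ reduces the cells built from blocks of ``large'' size to those built from blocks of size $\le[\frac{n}{2}]$.

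The main obstacle I expect is the analytic control of the partial Bessel functions in the inductive step: one must make precise the germ expansion of the partial Bessel integrals along each cell (in the spirit of the work of Cogdell--Shahidi--Tsai on Bessel functions for $\GL_n$), show that the Howe vectors for $\pi_1$ and $\pi_2$ can be chosen so that the comparison is legitimate and uniform in the parameter $m$, and verify that after cancelling the already-known contributions of the higher cells the surviving leading term isolates $j_{\pi}$ on exactly the cell $B_nwB_n$, with no leakage from cells of incomparable type. Keeping this cell-by-cell bookkeeping coherent as $n$ grows, and making sure the induction genuinely closes using only $\tau$ of rank at most $[\frac{n}{2}]$, is the technical heart of the argument.
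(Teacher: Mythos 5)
Your proposal takes a genuinely different route from the paper, and it has a concrete gap in the base case, so let me address both.

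The paper does \emph{not} reduce to the supercuspidal case and does \emph{not} use Bessel functions at all. It works directly with a pair of irreducible generic $\pi_1,\pi_2$ and exploits the common submodule $V_0={\rm ind}_{ZU}^P\omega\psi_U$ inside $\CW(\pi_1,\psi)|_P$ and $\CW(\pi_2,\psi)|_P$ (Propositions~\ref{prop1}, \ref{prop2}), so that one has a single space of vectors $v\in V_0$ with associated Whittaker functions $W^1_v, W^2_v$ agreeing on $P$. The ``cells'' used are not the full Bruhat cells $B_nwB_n$ but the much coarser $(U,P)$-double cosets $U\alpha^iP$, $0\le i\le n-1$, with $\alpha$ the cyclic permutation matrix; one says $\pi_1,\pi_2$ \emph{agree at height} $i$ when $W^1_v=W^2_v$ on $U\alpha^iP$ for all $v$. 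Agreement at height $0$ is automatic, agreement at heights $1,\dots,[\tfrac n2]$ follows from $\CH_{\le[\frac n2]}$ via Chen's Lemma~\ref{lem7}, and the new content is the upward induction of Proposition~\ref{prop3}: assuming agreement at heights $\le t$ for some $[\tfrac n2]\le t\le n-2$, one gets agreement at height $t+1$, by inserting the agreement into a Rankin--Selberg integral against $\tau$ of rank $n-t-1\le[\tfrac n2]$, applying the functional equation~\eqref{fe}, and then undoing the unwanted unipotent integration via a delicate Fourier-inversion argument (Lemma~\ref{lem3}). So the approach you sketch is much closer to Chai's independent proof~\cite{Ch16} (and to Nien's finite-field argument~\cite{N14}) than to this paper.

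On the substance of your sketch, the base case is not correct as stated. You claim that on the open Bruhat cell $B_nw_\ell B_n$ the Bessel function of a supercuspidal is a Kloosterman-type integral depending only on the central character $\omega$, and you present this agreement as ``free,'' with $\CH_{\le[\frac n2]}$ entering only in the inductive step. This cannot be right: the restriction of $j_{\pi,\psi}$ to the open cell (more precisely, to the torus slice $w_\ell T_n$) is essentially the inverse Mellin transform of the family $\gamma(s,\pi\times\chi,\psi)$ over characters $\chi$ of $F^\times$, so equality of $j_{\pi_1,\psi}$ and $j_{\pi_2,\psi}$ on the open cell is equivalent to $\CH_1$, not something that follows from $\CH_0$ alone. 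In other words, the hypothesis on gamma factors is already what buys you the open cell, so the proposed ``downward induction with a free base case'' needs to be restructured. Related to this, the claim that the functional equation $\gamma(s,\pi\times\tau,\psi)\gamma(1-s,\wt\pi\times\wt\tau,\psi^{-1})=1$ lets you ``match a relevant cell with its mirror cell'' and thereby halve the required range of $r$ is left as an assertion; making it precise is exactly the technical heart, and in the paper this is where the functional equation~\eqref{fe} together with Lemmas~\ref{lem4} and~\ref{lem3} do the work (the analogue in the Bessel-function language is precisely what Chai had to develop). As it stands your proposal identifies the right themes (Howe vectors, germ expansions, cell-by-cell cancellation) but does not supply either a correct base case or a verifiable inductive step.
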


We were recently informed that Chai has an independent and different proof of Conjecture \ref{lcp} (\cite{Ch16}). 

One straightforward application of Theorem \ref{main} is that it reduces the amount of necessary $\GL$-twisted local factors, in order to obtain the uniqueness of local Langlands correspondence (proved by Henniart in \cite{H02}), and it also gives a corresponding local converse theorem for local Langlands parameters via the local Langlands correspondence. 

By the argument of \cite[Section 7, Theorem]{CPS99}, one can see that Conjecture \ref{lcp2} is a consequence of the global version of Conjecture \ref{lcp} (\cite[Section 8, Conjecture 1]{CPS99}). Hence, Theorem \ref{main} provides evidence for the global version of Conjecture \ref{lcp} on the global converse theorem. 

It is easy to find pairs of generic representations showing that in Conjecture 1.1, $[\frac{n}{2}]$ is sharp for the generic dual of $G_n$. In \cite{ALST16}, we showed that, in Conjecture \ref{lcp2}, $[\frac{n}{2}]$ is sharp for the supercuspidal dual of $G_n$, for $n$ prime, in the tame case. It is believed that in Conjecture \ref{lcp2}, $[\frac{n}{2}]$ is sharp for the supercuspidal dual of $G_n$, for any $n$, in all cases.
This is our work in progress. 
However, it is expected that for certain families of supercuspidal representations, $[\frac{n}{2}]$ may not be sharp, for example, for simple supercuspidal representations (of depth $\frac{1}{n}$), the upper bound may be lowered to 1 (see \cite[Proposition 2.2]{BH14} and \cite[Remark 3.18]{AL16} in general, and \cite{X13} in the tame case).

Nien in \cite{N14} proved the finite fields analogue of Conjecture \ref{lcp}, using special properties of normalized Bessel functions. We remark that the idea in this paper also applies to the finite field case, and could give a new proof for the result in \cite{N14}. Moss in \cite{M16} proved an analogue of Conjecture $\CJ(n,n-1)$ for $\ell$-adic families of smooth representations of $\GL_n(F)$, where F is a finite extension of $Q_p$ and $\ell$ is different from $p$. 

The local converse problem has been studied for irreducible generic representations of groups other than $\GL_n$: $\mathrm{U}(2,1)$ and $\mathrm{GSp}(4)$ (Baruch,  \cite{B95} and \cite{B97}); $\SO(2n+1)$ (Jiang and Soudry,  \cite{JS03}); $\rm{U}(1,1)$ and $\rm{U}(2,2)$ (Zhang, \cite{Z15a} and \cite{Z15b}). 
We remark that since the local converse theorem for $\SO_{2n+1}$ in \cite{JS03} is eventually reduced to the local converse theorem for $\GL_{2n}$, following exactly the same proof given in \cite{JS03}, Theorem \ref{main} implies that twisting up to irreducible generic representations of $\GL_n$ is enough in the local converse theorem for $\SO_{2n+1}$ in \cite{JS03}.  

Section 2 will be preparation on properties of irreducible generic representations of $\GL_n(F)$ and Rankin-Selberg convolution. Theorem \ref{main} will be proved in Section 3.
Section 4 will be the proof of Proposition \ref{prop3}. 

Finally, we would like to thank J. Cogdell, D. Jiang and F. Shahidi for their interest in the problems discussed in this paper and for their encouragements, and S. Stevens for a helpful suggestion which makes the paper more readable.  We also would like to thank the referee for helpful comments and suggestions.

\subsection*{Acknowledgements} This material is based upon work supported by the National Science Foundation under agreement No. DMS-1128155. Any opinions, findings and conclusions or recommendations expressed in this material are those of the authors and do not necessarily reflect the views of the National Science Foundation.

\section{Generic representations and Rankin-Selberg convolution}

In this section, we review basic results on generic representations and the Rankin-Selberg convolution, which will be used in the proof of Theorem \ref{main} in Section 3. 

Let $F$ be a non-archimedean local field, and let $q$ be the cardinality of the residue field of $F$. 
Let $G_n:=\GL_n(F)$. All representations of $G_n$ considered in this paper are irreducible smooth and complex. 

\subsection{Whittaker models}

Let $B_n=T_nU_n$ be the standard Borel subgroup of $G_n$ consisting of upper triangular matrices, with unipotent radical $U:=U_n$ and diagonal group $T_n$. Fix a nontrivial additive character $\psi$ of $F$. Define a non-degenerate character $\psi_{U_n}$ of $U_n$ also denoted by $\psi_U$ as follows:
$$\psi_{U_n}(u):=\psi\left(\sum_{i=1}^{n-1} u_{i,i+1}\right)\,,\, u \in U_n\,.$$

An irreducible representation $(\pi,V)$ of $G_n$ is {\it generic} if 
$$\Hom_{G_n}(V, \Ind_{U}^{G_n} \psi_U) \neq 0\,.$$
It is known that if $\pi$ is generic, then the above Hom-space is of dimension 1. Let $\pi$ be an irreducible generic representation of $G_n$, fix a nonzero  functional $\ell$ in the above Hom-space, then the image of $V$ under $\ell$ is called the {\it Whittaker model} of $\pi$, denoted by $\CW(\pi,\psi)$. It is known that $\CW(\pi,\psi)$ is independent of the choice of $\ell$. For each $v \in V$, let $W_v=\ell(v)$. Then 
for $u \in U$, $g \in G_n$, 
$$W_v(ug)=\psi_U(u)W_v(g)\,,$$
$$W_v(g)=\ell(v)(g)= \ell(\pi(g)v)(I_n)=W_{\pi(g)v}(I_n)\,.$$
For $W \in \CW(\pi,\psi)$, let 
$$\wt{W}(g)=W(\omega_n {}^t g^{-1})\,,$$
where 
$$\omega_1=1\,,\, \omega_n=\begin{pmatrix}
0 & 1\\
\omega_{n-1} & 0
\end{pmatrix}\,.$$
It is well known that $\wt{W} \in \CW(\wt{\pi}, \ol{\psi})$, where $\wt{\pi}$ is the representation contragradient to $\pi$. 

Let $P$ be the maximal parabolic subgroup of $G_n$ with Levi subgroup $G_{n-1} \times G_1$. Let $Z$ be the center of $G_n$.  
Given two irreducible generic representations $\pi_1$ and $\pi_2$ of $G_n$ with the same central character, to show that $\pi_1 \cong \pi_2$, it suffices to show that their Whittaker models $\CW(\pi_1,\psi)$ and $\CW(\pi_2,\psi)$ have a nonzero intersection. The following two propositions allow us to study Whittaker functions by restricting them to $P$. 

\begin{prop}[\cite{GK75}]\label{prop1}
Let $\pi$ be an irreducible generic representation of $G_n$ with central character $\omega_{\pi}$. Then the restriction $\CW(\pi,\psi)|_P$ has a Jordan-H$\ddot{o}$lder series of finite length which contains the compact induction ${\rm ind}_{ZU}^P \omega_{\pi} \psi_U$ as an irreducible subrepresentation.
\end{prop}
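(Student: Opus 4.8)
The plan is to derive the statement from the Bernstein--Zelevinsky theory of derivatives, after first replacing $P$ by the mirabolic subgroup. Let $P_n\subset G_n$ be the stabilizer of the row vector $(0,\dots,0,1)$ under right multiplication, so that $P_n$ has Levi decomposition $P_n=G_{n-1}\ltimes F^{n-1}$, and note that $P=ZP_n$ with $Z\cap P_n=\{I_n\}$. Since $\pi$ has central character $\omega_\pi$, the centre $Z$ acts on $\CW(\pi,\psi)$ through the scalar $\omega_\pi$; hence the lattices of $P$-submodules and of $P_n$-submodules of $\CW(\pi,\psi)$ coincide, so a Jordan--H\"older series over $P$ is the same as one over $P_n$, and by Mackey's theorem the restriction of ${\rm ind}_{ZU}^{P}\omega_\pi\psi_U$ to $P_n$ is ${\rm ind}_{U}^{P_n}\psi_U$ (the unique relevant double coset contributes $P_n\cap ZU=U$). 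Thus it suffices to prove that $\CW(\pi,\psi)|_{P_n}$ has finite length and contains ${\rm ind}_{U}^{P_n}\psi_U$ as an irreducible subrepresentation.

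For this I would invoke the exact functors $\Phi^\pm$ between $\mathrm{Rep}(P_k)$ and $\mathrm{Rep}(P_{k-1})$ and $\Psi^\pm$ between $\mathrm{Rep}(P_k)$ and $\mathrm{Rep}(G_{k-1})$, together with the derivatives $\pi^{(k)}:=\Psi^-(\Phi^-)^{k-1}(\pi|_{P_n})\in\mathrm{Rep}(G_{n-k})$, and the Bernstein--Zelevinsky structure theorem: $\pi|_{P_n}$ carries a canonical finite filtration whose successive quotients, read from the unique smallest nonzero subrepresentation upward, are $(\Phi^+)^{k-1}\Psi^+(\pi^{(k)})$ for $k=n,n-1,\dots,1$. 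In particular, the smallest nonzero subrepresentation is $(\Phi^+)^{n-1}\Psi^+(\pi^{(n)})$, where $\pi^{(n)}$ is the top derivative, a representation of the trivial group $G_0$. Here one uses that $\pi^{(n)}$, as a vector space, is the space of twisted $U$-coinvariants of $\pi$, which is one-dimensional precisely when $\pi$ is generic (this is the uniqueness of the Whittaker functional recalled above). Granting this, an unwinding of the definitions of $\Phi^+$ and $\Psi^+$ identifies $(\Phi^+)^{n-1}\Psi^+(\mathbf 1)$ with ${\rm ind}_{U}^{P_n}\psi_U$, so that ${\rm ind}_{U}^{P_n}\psi_U$ occurs as a genuine subrepresentation of $\CW(\pi,\psi)|_{P_n}$, not merely a subquotient.

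Two points remain, and they carry the real content. First, finiteness of length: as $\pi$ is irreducible it is admissible and of finite length, so each derivative $\pi^{(k)}$ is an admissible, finite-length representation of $G_{n-k}$; since $\Phi^+$ and $\Psi^+$ are exact and carry irreducibles to irreducibles they preserve finite length, so each of the $n$ graded pieces above, and hence $\CW(\pi,\psi)|_{P_n}$ itself, has finite length. Second, irreducibility: the $P_n$-module ${\rm ind}_{U}^{P_n}\psi_U$ is irreducible --- this is exactly the theorem of Gelfand--Kazhdan \cite{GK75}, equivalently the assertion that $(\Phi^+)^{n-1}$ sends the one-dimensional representation of the trivial group to an irreducible representation of $P_n$. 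Feeding these back through the first paragraph gives the proposition. I expect the genuinely non-formal ingredients to be precisely the Gelfand--Kazhdan irreducibility statement and the finiteness-of-length of the derivatives (i.e.\ the verification that the Bernstein--Zelevinsky functors stay within the category of admissible finite-length modules); everything else is bookkeeping. As an alternative to invoking the abstract filtration, one can argue directly inside the Kirillov model of $\pi$, realizing ${\rm ind}_{U}^{P_n}\psi_U$ as the subspace of Whittaker functions whose restrictions to $P_n$ are compactly supported modulo $U$ and using exactness of the Jacquet-type functors to see that the resulting quotient has finite length.
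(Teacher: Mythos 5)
The paper gives no proof of this proposition; it is stated with the citation \cite{GK75} and used as a known result, so there is no in-paper argument to compare against. Your proof is correct and is the standard one via Bernstein--Zelevinsky theory. The reduction to the mirabolic $P_n$ (using $P=ZP_n$ with $Z\cap P_n=\{I_n\}$, the central character hypothesis to pass between $P$- and $P_n$-submodules, and the elementary identification $({\rm ind}_{ZU}^{P}\omega_\pi\psi_U)\vert_{P_n}\cong{\rm ind}_{U}^{P_n}\psi_U$) is clean; the identification of the bottom layer of the canonical filtration of $\pi\vert_{P_n}$ with $(\Phi^+)^{n-1}\Psi^+(\pi^{(n)})\cong{\rm ind}_{U}^{P_n}\psi_U$, using $\dim\pi^{(n)}=1$ for generic $\pi$, is accurate; and you correctly isolate the two genuinely nontrivial inputs, namely Gelfand--Kazhdan's irreducibility of ${\rm ind}_{U}^{P_n}\psi_U$ and the Bernstein--Zelevinsky theorem that derivatives of an admissible finite-length representation of $G_n$ are again admissible of finite length. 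The only caveat is bibliographic rather than mathematical: the existence of the finite Jordan--H\"older series for general $n$ is really a Bernstein--Zelevinsky result (\cite{BZ77}) built on the Gelfand--Kazhdan irreducibility theorem, which is consistent with the structure of your argument even though the paper attributes the whole proposition to \cite{GK75}. Your alternative Kirillov-model sketch at the end is an equivalent repackaging of the same facts.
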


The following proposition is proven in \cite{JPSS79} for $n=3$, and the same argument works for general $n$. The proof can also be found in \cite[Theorem 4.9]{BZ77}. 

\begin{prop}\label{prop2}
Let $(\pi, V)$ be an irreducible generic representation of $G_n$. Then
$$v \mapsto W_v|_P$$
is an injective map from $V$ to the space of smooth functions on $P$. 
\end{prop}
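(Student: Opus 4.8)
Since $\pi$ is irreducible and the Whittaker functional is nonzero, the map $v\mapsto W_v$ from $V$ to $\CW(\pi,\psi)$ is already injective, so the proposition is equivalent to the assertion that a Whittaker function $W\in\CW(\pi,\psi)$ with $W|_P=0$ vanishes identically. The first step is to reduce this to the mirabolic subgroup: writing $\mathrm{P}_n$ for the subgroup of $G_n$ consisting of matrices with bottom row $(0,\dots,0,1)$ and $Z$ for the center, one has $P=\mathrm{P}_n Z$, and since $W$ transforms under $Z$ by the central character $\omega_\pi$, the hypothesis $W|_P=0$ is equivalent to $W|_{\mathrm{P}_n}=0$. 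Thus it suffices to prove that the Kirillov-type restriction map $\CW(\pi,\psi)\to C^\infty(\mathrm{P}_n)$ is injective, i.e.\ that a nonzero Whittaker function cannot vanish on all of $\mathrm{P}_n$.

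The plan is to prove this by induction on $n$, the case $n=1$ being trivial. For the inductive step, use the decomposition $G_n=P\sqcup \mathrm{P}_n\, w\, P$ into the two $\mathrm{P}_n$--$P$ double cosets, where $w$ is the permutation matrix interchanging the first and last basis vectors; we already know that $W$ vanishes on $P$, so it remains to control $W(p_1 w p_2)$ for $p_1,p_2\in\mathrm{P}_n$. Decomposing $\mathrm{P}_n$ as $\GL_{n-1}$ (acting on the first $n-1$ coordinates) extended by its abelian unipotent radical $\cong F^{n-1}$ (the last column), one uses the left $\psi_U$-equivariance on $U_n$ together with Fourier analysis along that abelian radical to push the unipotent parts of $p_1$ and $p_2$ across $w$; what results is a combination of translates of $W$ whose restrictions to the $\GL_{n-1}$-block are controlled by the inductive hypothesis applied to the relevant constituents of $\pi$ regarded as a representation of $\GL_{n-1}$. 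This is the argument carried out in \cite{JPSS79} for $n=3$, which works verbatim in general. Alternatively, and more quickly, one can invoke the Bernstein--Zelevinsky description of the Kirillov model and of the derivatives (see \cite[Theorem 4.9]{BZ77}): the socle of $\pi|_{\mathrm{P}_n}$ is the unique irreducible subrepresentation ${\rm ind}_{U_n}^{\mathrm{P}_n}\psi_U$, occurring with multiplicity one, so if the kernel $N=\{v\in V:W_v|_P=0\}$—which is visibly a $P$-submodule of $V$, hence a $\mathrm{P}_n$-submodule—were nonzero it would contain a copy of ${\rm ind}_{U_n}^{\mathrm{P}_n}\psi_U$, while $V/N\cong\CW(\pi,\psi)|_P$ contains another such copy by Proposition~\ref{prop1} (restrict the subrepresentation ${\rm ind}_{ZU}^P\omega_\pi\psi_U$ to $\mathrm{P}_n$ via Mackey), contradicting the multiplicity-one statement; hence $N=0$.

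The main obstacle is precisely the passage across the nontrivial double coset $\mathrm{P}_n w P$. A Whittaker function is equivariant only under left translation by $U_n$, not by $\mathrm{P}_n$, so vanishing on $P$ does not propagate to $\mathrm{P}_n w P$ by any formal manipulation; one genuinely needs the interaction between the Fourier expansion of $W$ along the unipotent radical of $\mathrm{P}_n$ and the mirabolic structure of the smaller group $\GL_{n-1}$, fed by the induction (equivalently, one needs the Bernstein--Zelevinsky structure theory invoked above). Everything else—the double-coset decomposition, the reduction via the central character, and the smoothness bookkeeping—is routine.
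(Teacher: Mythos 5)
The paper itself offers no proof of Proposition~\ref{prop2}: it simply cites \cite{JPSS79} for $n=3$ (``the same argument works for general $n$'') and \cite[Theorem~4.9]{BZ77}. Your two alternatives correspond exactly to those two references, so in that sense you have reconstructed the content behind the citation rather than found a genuinely different route: the first alternative is the Kirillov/Fourier argument of \cite{JPSS79}, the second is the structural argument via the Bernstein--Zelevinsky derivative theory. The preliminary reduction $P=\mathrm{P}_n Z$ and the use of the central character to replace $P$ by the mirabolic is correct and is indeed how one must begin.

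There is, however, a gap in the structural alternative as you have written it. You invoke two separate facts: (i) the socle of $\pi|_{\mathrm{P}_n}$ is irreducible and equals $\mathrm{ind}_{U_n}^{\mathrm{P}_n}\psi_U$, and (ii) $\mathrm{ind}_{U_n}^{\mathrm{P}_n}\psi_U$ has multiplicity one in the Jordan--H\"older series of $\pi|_{\mathrm{P}_n}$. Fact~(ii) is clean: it follows from exactness of the top derivative $(\cdot)^{(n)}$, the computation $\pi^{(n)}\cong\BC$ (uniqueness of the Whittaker model), and the fact that $\mathrm{ind}_{U_n}^{\mathrm{P}_n}\psi_U$ is the unique irreducible $\mathrm{P}_n$-module with nonzero $n$-th derivative. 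But fact~(ii) alone does not yield a contradiction from $N\neq 0$: it only tells you that $\mathrm{ind}$ cannot appear among the JH factors of $N$, equivalently $N^{(n)}=0$, and that is precisely the defining condition for $N$ to lie in the kernel of restriction, so nothing has been gained. What actually closes the argument is fact~(i) — that \emph{every} nonzero $\mathrm{P}_n$-submodule of $\pi|_{\mathrm{P}_n}$ contains $\mathrm{ind}$. That statement is considerably deeper than the multiplicity count and is, for practical purposes, the content of the proposition itself; one must be careful not to cite it in a way that is circular, since in the classical sources it is established by essentially the same induction you sketch in your first alternative. A further small point: when you claim that the copy of $\mathrm{ind}_{ZU}^P\omega_\pi\psi_U$ supplied by Proposition~\ref{prop1} injects into $V/N$, this requires a short argument (e.g.\ via Frobenius reciprocity $\Hom_{\mathrm{P}_n}\bigl(\CW(\pi,\psi),\Ind_{U_n}^{\mathrm{P}_n}\psi_U\bigr)\cong\Hom_{U_n}\bigl(\CW(\pi,\psi),\psi_U\bigr)\cong\BC$, which forces the Whittaker functional to be nonvanishing on that subrepresentation) rather than a bare appeal to ``Mackey''. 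None of this is fatal, but as written the second alternative does not stand on its own; the first alternative, carried out as in \cite{JPSS79}, is the self-contained argument.
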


Let $\pi_1$, $\pi_2$ be two irreducible generic representations of $G_n$ with the same central character $\omega$. 
Let $V_0={ \rm ind}_{ZU}^P \omega\psi_U$. For $p\in P$, let $\rho(p)$ be the operator of right translation on complex functions $v$ on $P$:
$$\rho(p)v(x)=v(xp)\,.$$ 
By Propositions \ref{prop1} and \ref{prop2},
for any $v \in V_0$ there is a unique element $W^i_v$ in the Whittaker model of $\pi_i$ such that, for all $p \in P$, $W^i_v(p)=v(p)$. Thus, we have
$$W^1_v(p)=W^2_v(p)\,,\, \forall p \in P\,,\, \forall v \in V_0\,.$$
Note that for $p\in P$, we have
$$W^i_v(gp)=W^i_{\rho(p)v}(g)\,,\, \forall g \in G_n\,,\, \forall v \in V_0\,.$$

\subsection{Rankin-Selberg convolution}

Let $n, t \in \BZ_{\geq 1}$, and let $\pi$ and $\tau$ be irreducible generic representations of $G_n$ and $G_t$, with Whittaker models $\CW(\pi,\psi)$ and $\CW(\tau,\ol{\psi})$, respectively. Let $W \in \CW(\pi,\psi)$ and $W' \in \CW(\tau,\ol{\psi})$. Assume that $n > t$, which is the case of interest to us in this paper. 

For any integer $j$ with $0 \leq j \leq n-t-1$, let $k=n-t-1-j$, define a local zeta integral as follows:
\begin{equation}\label{zeta}
\Psi(s,W,W';j):=\int
\int
W \begin{pmatrix}
g & 0 & 0 \\
X & I_j & 0\\
0 & 0 & I_{k+1}
\end{pmatrix}W'(g) \lvert \det g \rvert^{s-\frac{n-t}{2}} dx dg\,,
\end{equation}
with integration being over $g \in U_t \bs G_t$ and $X \in M_{j \times t}(F)$. 

For $g \in G_n$, let $\rho(g)$ be the operator of right translation on complex functions $f$ on $G_n$:
$$\rho(g)f(x)=f(xg)\,.$$ 
Let 
$$\omega_{n,t}=\begin{pmatrix}
I_t & 0 \\
0 & \omega_{n-t}
\end{pmatrix}\,.$$

The following result is about functional equations for a pair of irreducible generic representations, proved by the first named author, Piatetski-Shapiro and Shalika in \cite{JPSS83}. It plays an important role in proving the main result of this paper.  

\begin{thm}[\cite{JPSS83}, Section 2.7]\label{rsconv}
With notation as above, the followings hold. 
\begin{enumerate}
\item Each integral $\Psi(s,W,W';j)$ is absolutely convergent for 
${\rm Re}(s)$ large and is a rational function of $q^{-s}$. More precisely, for any fixed $j$, the integrals $\Psi(s,W,W';j)$ span
a fractional ideal (independent of $j$) of $\BC[q^s, q^{-s}]$:
$$\BC[q^s, q^{-s}]L(s, \pi \times \tau)\,,$$
where the local factor $L(s, \pi \times \tau)$ has the form $P(q^{-s})^{-1}$, with $P \in \BC[X]$ and $P(0)=1$.

\item For any $0 \leq j \leq n-t-1$, there is a factor $\epsilon(s, \pi \times \tau, \psi)$, independent of $j$, such that 
$$\frac{\Psi(1-s, \rho(\omega_{n,t})\wt{W},\wt{W'};k)}{L(1-s, \wt{\pi} \times \wt{\tau})}
= \omega_{\tau}(-1)^{n-1} \epsilon(s, \pi \times \tau, \psi) \frac{\Psi(s,W,W';j)}{L(s, \pi \times \tau)}\,,$$
where $k=n-t-1-j$ and $\omega_{\tau}$ is the central character of $\tau$. 
\end{enumerate}
\end{thm}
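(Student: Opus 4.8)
The plan is to treat the two assertions separately: part (1) is a ``Tate thesis for $\GL_n\times\GL_t$'' argument built on the asymptotics of Whittaker functions, and part (2) then follows from a local uniqueness principle together with an explicit unfolding. For the convergence and rationality in part (1), I would write the argument of $W$ in the integrand of $\Psi(s,W,W';j)$ as $\diag(g,I_{n-t})$ times the unipotent matrix supported in the block $X$, and bound the growth of $W$ along the diagonal torus of $G_t$ and in the $X$-directions by the standard ``gauge'' estimate for Whittaker functions (Jacquet--Piatetski-Shapiro--Shalika); combined with the rapid decay of $W'$ on $U_t\bs G_t$ this yields absolute convergence for $\Re(s)$ large and, after an Iwasawa decomposition, presents $\Psi(s,W,W';j)$ as a finite sum of integrals each summing a geometric series in $q^{-s}$, hence as a rational function of $q^{-s}$. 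Next I would show that the $\BC$-span $\CI_j(\pi,\tau)$ of the functions $\Psi(\,\cdot\,,W,W';j)$, $W\in\CW(\pi,\psi)$, $W'\in\CW(\tau,\ol\psi)$, is a nonzero $\BC[q^{s},q^{-s}]$-submodule of $\BC(q^{-s})$ with a common denominator: stability under multiplication by $q^{\pm s}$ follows by translating $W$ on the right by $\diag(g_0,I_{n-t})$ with $|\det g_0|=q^{-1}$ and changing variables $g\mapsto gg_0^{-1}$, $X\mapsto Xg_0^{-1}$, which multiplies $\Psi$ by $q^{s}$ up to a nonzero constant; nonvanishing comes from choosing, via Proposition \ref{prop1}, $W$ with $W|_{P}$ the function in $\mathrm{ind}_{ZU}^{P}\omega_\pi\psi_{U}$ supported on $ZU$ (and $W'$ suitable), for which $\Psi$ unwinds to a nonzero monomial; and the common-denominator bound is extracted from the finite-length structure of $\CW(\pi,\psi)|_{P}$ in Proposition \ref{prop1}. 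A nonzero fractional ideal of the principal ideal domain $\BC[q^{s},q^{-s}]$ is principal, so $\CI_j(\pi,\tau)=\BC[q^{s},q^{-s}]\,P(q^{-s})^{-1}$ for a unique $P\in\BC[X]$ with $P(0)=1$; put $L(s,\pi\times\tau):=P(q^{-s})^{-1}$. The independence of $j$ I would deduce from the ``exchange of roots'' lemma: integrating $\Psi(s,W,W';j)$ against an additional block of unipotent variables and reversing the order of integration converts it into an expression of the form $\Psi(s,\pi(u)W,W';j\pm1)$, so $\CI_j(\pi,\tau)=\CI_{j'}(\pi,\tau)$ for all $j,j'$.

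For part (2), the idea is to invoke local uniqueness. Fix $s$ outside the finitely many poles of the functions in $\CI_j(\pi,\tau)$. A change of variables shows that the bilinear form $(W,W')\mapsto\Psi(s,W,W';j)$ on $\CW(\pi,\psi)\times\CW(\tau,\ol\psi)$ transforms, under the diagonal action $W\mapsto\rho(\diag(g_0,I_{n-t}))W$, $W'\mapsto\rho(g_0)W'$ of $g_0\in G_t$, by the character $g_0\mapsto|\det g_0|^{-s+\frac{n-t}{2}-j}$, and by a filtration argument (Bernstein--Zelevinsky derivatives of $\pi|_{P}$ and the genericity of $\tau$) the space of such forms is at most one-dimensional. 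Using the contragredient dictionary recalled before the statement, together with the identity $\wt{\rho(h)W}=\rho({}^th^{-1})\wt W$ and the commutativity of $\omega_{n,t}$ with $\diag(\,\cdot\,,I_{n-t})$, one checks that $(W,W')\mapsto\Psi(1-s,\rho(\omega_{n,t})\wt W,\wt{W'};k)$ transforms under the same action by the same character (here $k=n-t-1-j$ makes the exponents agree); hence the two forms are proportional, with ratio a rational function of $q^{-s}$ independent of $W,W'$. Unwinding both integrals on the dense set of $W$ with $W|_{P}\in\mathrm{ind}_{ZU}^{P}\omega_\pi\psi_{U}$ (Propositions \ref{prop1} and \ref{prop2}) confirms the proportionality, identifies the ratio up to the constant $\omega_\tau(-1)^{n-1}$, and, combined with the exchange-of-roots identities and the independence of the $L$-factors on $j$, shows the ratio is independent of $j$; call it $\omega_\tau(-1)^{n-1}\gamma(s,\pi\times\tau,\psi)$. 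Finally, putting $\epsilon(s,\pi\times\tau,\psi):=\gamma(s,\pi\times\tau,\psi)\,L(s,\pi\times\tau)\,L(1-s,\wt\pi\times\wt\tau)^{-1}$, the functional equation carries the generator of $\CI_j(\pi,\tau)$ onto the generator of $\CI_k(\wt\pi,\wt\tau)$ (part (1) for both pairs, with independence of $j$), which forces $\epsilon$ to be a unit of $\BC[q^{s},q^{-s}]$, i.e.\ of the form $c\,q^{-fs}$.

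The step I expect to be the main obstacle lies inside part (1): proving that $\CI_j(\pi,\tau)$ is a genuine fractional ideal (the common-denominator bound, ultimately resting on the finer structure of the Kirillov/Whittaker restriction to $P$) and that it is independent of $j$ (the exchange-of-roots lemma) is where the real work is concentrated. Once part (1) is available, part (2) is comparatively formal; the only delicate bookkeeping there is tracking how $\omega_n$-conjugation interacts with the substitution $j\leftrightarrow k=n-t-1-j$ and with the $\psi\leftrightarrow\ol\psi$ and central-character twists, which is precisely what produces the factor $\omega_\tau(-1)^{n-1}$.
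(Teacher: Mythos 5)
The theorem you are asked to prove is not proved in this paper at all: it is quoted (with a notational adaptation to the setup of Section~2) from \cite{JPSS83}, Section~2.7, and used as a black box in the proof of Proposition~\ref{prop3}. There is therefore no ``paper's own proof'' to compare your sketch against; the only meaningful comparison is with the original argument of Jacquet--Piatetski-Shapiro--Shalika.

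That said, your sketch does reproduce the broad architecture of \cite{JPSS83}: gauge estimates for Whittaker functions give absolute convergence for $\Re(s)\gg0$ and, after Iwasawa decomposition, rationality; right translation by $\diag(g_0,I_{n-t})$ makes the span of the integrals a $\BC[q^s,q^{-s}]$-module, and the common-denominator bound coming from the finite-length restriction to $P$ turns it into a fractional ideal; the PID structure of $\BC[q^s,q^{-s}]$ then yields the $L$-factor, and exchange-of-roots identities give independence of $j$. For part~(2), the uniqueness of bilinear forms satisfying the relevant equivariance (via Bernstein--Zelevinsky derivative filtrations) gives proportionality, and matching equivariances produces the functional equation. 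This is the correct skeleton.

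One detail is stated incorrectly and would derail the equivariance bookkeeping if taken literally: the identity $\wt{\rho(h)W}=\rho({}^t h^{-1})\wt W$ is false. With $\wt W(g)=W(\omega_n\,{}^tg^{-1})$ one has $\wt{\rho(h)W}(g)=W(\omega_n\,{}^tg^{-1}h)$, while $\rho({}^th^{-1})\wt W(g)=W(\omega_n\,h^{-1}\,{}^tg^{-1})$, and these are not equal in general. The correct relation is $\wt{\rho(h)W}=\rho(\omega_n\,{}^th^{-1}\omega_n^{-1})\wt W$; it is precisely this conjugation by $\omega_n$, specialized to $h=\diag(g_0,I_{n-t})$, that produces the $\omega_{n,t}$ appearing in the functional equation and (together with the central-character twist) the sign $\omega_\tau(-1)^{n-1}$. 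If you drop the $\omega_n$-conjugation, the two bilinear forms in your uniqueness argument will not transform by the same character of $G_t$, and the proportionality will not follow.
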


The local gamma factor attached to a pair $(\pi, \tau)$ is defined to be 
$$\gamma(s, \pi \times \tau, \psi) = \epsilon(s, \pi \times \tau, \psi) \frac{L(1-s, \wt{\pi} \times \wt{\tau})}{L(s, \pi \times \tau)}\,.$$
Then the functional equation in Part (ii) of Theorem \ref{rsconv} can be written as 
\begin{equation}\label{fe}
\Psi(1-s,\rho(\omega_{n,t})\wt{W},\wt{W'};k)
= \omega_{\tau}(-1)^{n-1} \gamma(s, \pi \times \tau, \psi) \Psi(s,W,W';j)\,.
\end{equation}

At the end of this section, we introduce the following important lemma. 

\begin{lem}\label{lem4}
Let $\pi_1$ and $\pi_2$ be two irreducible generic representations of $G_n$. Let $t \leq n-2$ and $j$ with $0 \leq j \leq t$. Suppose that $W^1$ and $W^2$ are elements in the Whittaker models of $\pi_1$ and $\pi_2$ respectively. Suppose further that for all irreducible generic representations $\tau$ of $G_{n-t-1}$ we have
$$\Psi(s, W^1, W'; j)=\Psi(s, W^2, W'; j)$$
for all $W' \in \CW(\tau, \ol{\psi})$ and for $\mathrm{Re}(s) \gg 0$. Then
$$\int W^1\begin{pmatrix}
I_{n-t-1} & 0 & 0 \\
X & I_j & 0\\
0 & 0 & I_{t+1-j}
\end{pmatrix} dX = \int W^2\begin{pmatrix}
I_{n-t-1} & 0 & 0 \\
X & I_j & 0\\
0 & 0 & I_{t+1-j}
\end{pmatrix} dX\,,$$
where the integrals are over $X \in M_{j \times (n-t-1)}(F)$. 
\end{lem}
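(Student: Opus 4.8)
The plan is to use the Rankin--Selberg integral $\Psi(s,W^i,W';j)$ as a generating function whose coefficients, under a suitable expansion, recover the partial Fourier transform of $W^i$ appearing in the conclusion. Concretely, I would fix an auxiliary generic representation $\tau$ of $G_{n-t-1}$ and a Whittaker function $W'\in\CW(\tau,\ol\psi)$, and evaluate the $G_t$-integration in \eqref{zeta} (with $t$ there equal to $n-t-1$ in the present notation -- I must be careful about the clash of letters) by restricting $W'$ to the mirabolic subgroup and using the Iwasawa decomposition. The contribution of each ``shell'' $|\det g|=q^{-m}$ to $\Psi(s,W^i,W';j)$ is a Laurent coefficient in $q^{-s}$, and since $\Psi(s,W^1,W';j)=\Psi(s,W^2,W';j)$ as rational functions, these coefficients agree termwise. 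The key point is that the $m=0$ (or appropriately normalized leading) shell isolates exactly the integral
\[
\int_{X\in M_{j\times(n-t-1)}(F)} W^i\begin{pmatrix} I_{n-t-1} & 0 & 0\\ X & I_j & 0\\ 0 & 0 & I_{t+1-j}\end{pmatrix}\,dX
\]
paired against the value $W'(I_{n-t-1})$, up to a fixed volume factor.

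The cleanest way to extract this is the standard device of letting $\tau$ (hence $W'$) vary so that $W'(g)$, restricted to the relevant mirabolic, approximates an arbitrary smooth compactly supported function on $U_{n-t-1}\bs P_{n-t-1}$ (using Proposition \ref{prop1}: the Whittaker model of $\tau$ contains $\mathrm{ind}_{ZU}^P\omega_\tau\psi_U$). Since the hypothesis holds for \emph{all} generic $\tau$ of $G_{n-t-1}$ and all $W'\in\CW(\tau,\ol\psi)$, I can choose $W'$ supported near the identity so that the zeta integral, after suitable translation, localizes the $g$-integral to a neighborhood of $I_{n-t-1}$; taking $\Re(s)$ large kills all lower shells, and comparing the two sides gives that the $X$-integrals of $W^1$ and $W^2$ agree after integration against an arbitrary test function on $G_{n-t-1}$ near the identity -- and hence agree pointwise, in particular at $I_{n-t-1}$. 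This is essentially the argument of \cite[Theorem 4.9]{BZ77} / \cite{JPSS79} combined with the rationality in Theorem \ref{rsconv}(i).

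In more detail, I would: (1) write out $\Psi(s,W^i,W';j)$, perform the Iwasawa decomposition $g=u a k$ on $U_{n-t-1}\bs G_{n-t-1}$, and separate variables so that the integrand becomes $W^i$ of the big matrix, times $W'(g)|\det g|^{s-c}$; (2) fix a compact-open subgroup $K'$ under which everything in sight is invariant, and choose $W'$ to be the characteristic-function-type Whittaker function in $\mathrm{ind}_{ZU}^P\omega_\tau\psi_U$ supported on $ZU K'$; (3) observe that for such $W'$ the integral collapses to $\mathrm{vol}(\cdot)\cdot\int_X W^i(\cdots)\,dX$ plus terms with $|\det g|$ small, the latter being a power series in $q^{-s}$ with no constant term; (4) since $\Psi(s,W^1,W';j)=\Psi(s,W^2,W';j)$ and both are rational in $q^{-s}$, equate the ``constant terms'' to conclude. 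Running this with $W'$ replaced by its right translates $\rho(p)W'$ for $p\in P_{n-t-1}$ upgrades the equality at $I_{n-t-1}$ to equality of the functions $p\mapsto \int_X W^i(\iota(p)\cdot\text{big matrix})\,dX$ on $P_{n-t-1}$; but for the statement of the lemma only the value at $I_{n-t-1}$ is needed.

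\textbf{Main obstacle.} The delicate point is step (2)--(3): justifying that one may choose $W'$ inside the Whittaker model of an \emph{irreducible generic} $\tau$ realizing the desired compactly supported behavior, and that the resulting manipulation of the double integral (interchanging the $X$- and $g$-integrations, controlling convergence for $\Re(s)\gg0$, and identifying precisely which shell the target integral sits in) is legitimate. Proposition \ref{prop1} gives the needed supply of Whittaker functions, but one must check that the ``leading shell'' in the Iwasawa decomposition of $g$ -- corresponding to $g\in G_{n-t-1}$ with $|\det g|$ maximal on the support -- is exactly the one producing $\int_X W^i$ with no interference from the additive integration over $X\in M_{j\times(n-t-1)}$; a careful bookkeeping of the block structure of the argument of $W^i$, using the equivariance $W^i(ug)=\psi_U(u)W^i(g)$, is what makes this work, and that is where the real content lies.
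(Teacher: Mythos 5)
Your first move — interpreting the hypothesis as an equality of formal Laurent series in $q^{-s}$ and extracting shell-by-shell equalities $\int_{|\det g|=C} W^1(\cdots)W'(g)\,dg = \int_{|\det g|=C} W^2(\cdots)W'(g)\,dg$ — is indeed how the paper starts, although your phrasing that ``taking $\Re(s)$ large kills all lower shells'' is off: large $\Re(s)$ just gives absolute convergence, and it is the coefficient-wise identity of the two Laurent series that isolates each shell. The decisive step, however, is different, and here your plan has a real gap. From the shell identities, what one needs is that pairing against $W'(g)$, as $\tau$ runs over \emph{all} irreducible generic representations of $G_{n-t-1}$ and $W'$ over their Whittaker models, is a \emph{separating} family on the compactly supported functions of $U_{n-t-1}\backslash G_{n-t-1}^0$. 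That is the content of the ``spectral theory of $L^2(U_{n-t-1}\backslash G_{n-t-1}^0)$'' the paper invokes (with pointers to Henniart and Chen). Your substitute — choosing $W'$ supported near the identity via Proposition~\ref{prop1} — does not deliver this: Gelfand--Kazhdan only lets you prescribe $W'|_{P_{n-t-1}}$, and the shell integral lives on $U_{n-t-1}\backslash G_{n-t-1}^0$, which is strictly larger than $U_{n-t-1}\backslash P_{n-t-1}$; you cannot control the support of an irreducible generic Whittaker function off the mirabolic. So the ``localization'' collapses exactly at the point where the nontrivial completeness theorem is needed, which is the gap you yourself flag in the ``Main obstacle'' paragraph.

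A second, smaller point: the reduction from $j>0$ to $j=0$ is not the delicate bookkeeping you describe. The paper observes that, for fixed $g$, the integrand is supported in $X$ inside a fixed compact set $\Omega\subset M_{j\times(n-t-1)}(F)$ independent of $g$; hence the $X$-integral of $W^i$ is again a Whittaker vector $W^i_0\in\CW(\pi_i,\psi)$, and the hypothesis for $(W^1,W^2,j)$ becomes the hypothesis for $(W^1_0,W^2_0,0)$. This clean reduction should replace the block-structure bookkeeping you sketch.
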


\begin{proof}
For $j=0$, the assumption is that 
\begin{align}\label{sec2equ1}
\begin{split}
\ & \int_{U_{n-t-1} \bs G_{n-t-1}} W^1\begin{pmatrix}
g & 0 \\
0 & I_{t+1}
\end{pmatrix} W'(g) \lvert \det g \rvert^{s+\frac{t+1}{2}} dg\\
= \ & \int_{U_{n-t-1} \bs G_{n-t-1}} W^2\begin{pmatrix}
g & 0 \\
0 & I_{t+1}
\end{pmatrix} W'(g) \lvert \det g \rvert^{s+\frac{t+1}{2}} dg\,,
\end{split}
\end{align}
for all $W'$. The conclusion is that $W^1(I_n)=W^2(I_n)$. 
Indeed, recall that given $C>0$ the relations
\[ \lvert \det g\rvert=C\,,\, W^i  \left(\begin{array}{cc} g & 0 \\ 0 & I_{t+1} \end{array}\right) \neq 0\]
imply that $g$ is in a set compact modulo $U_{n-t-1}$. Both sides of the identity \eqref{sec2equ1} converge for $\mathrm{Re}(s) \gg 0$. Thus they can be interpreted as formal Laurent series in $q^{-s}$. We conclude that for any $C>0$
\[ \int_{\lvert \det g \rvert=C }W^1 \left(\begin{array}{cc} g & 0 \\ 0 & I_{t+1} \end{array}\right) W'(g) dg = \int _{\lvert \det g \rvert=C}W^2 \left(\begin{array}{cc} g & 0 \\ 0 & I_{t+1} \end{array}\right) W'(g)  d g\,. \]
One then applies the spectral theory of the space $L^2 (U_{n-t-1}\backslash G_{n-t-1}^0)$ where $ G_{n-t-1}^0=\{ g \in G_{n-t-1}:\lvert \det g \rvert=1\}$. For more details, see \cite[Section 3]{H93} and \cite[Section 2]{Ch06}.

For $0 < j \leq t$, one observes that there is a compact subset $\Omega$ of $M_{j \times (n-t-1)}(F)$ such that for all $g \in G_{n-t-1}$ and $i=1,2$, 
$$W^i \begin{pmatrix}
g & 0 & 0 \\
X & I_j & 0\\
0 & 0 & I_{t+1-j}
\end{pmatrix}
\neq 0$$
implies that $X \in \Omega$. 
Thus, for $i=1,2$, there is an element $W^i_0 \in \CW(\pi_i, \psi)$ such that for all $g \in G_{n-t-1}$
$$\int_{M_{j \times (n-t-1)}(F)} W^i \begin{pmatrix}
g & 0 & 0 \\
X & I_j & 0\\
0 & 0 & I_{t+1-j}
\end{pmatrix} dX= W^i_0 \begin{pmatrix}
g & 0 & 0 \\
0 & I_j & 0\\
0 & 0 & I_{t+1-j}
\end{pmatrix}\,.$$
We are therefore reduced to the case $j=0$. 
\end{proof}

\section{Proof of Theorem \ref{main}}

In this section, we prove Theorem \ref{main}. 
Let $\pi_1$ and $\pi_2$ be irreducible generic representations of $G_n$ with the same central character $\omega$. 
We recall from Section 2.1 that $P$ is the maximal parabolic subgroup of $G_n$ with Levi subgroup $G_{n-1} \times G_1$, $Z$ is the center of $G_n$, $V_0={ \rm ind}_{ZU}^P \omega\psi_U$, and we have
\begin{equation}\label{sec3equ1}
W^1_v(p)=W^2_v(p)\,,\, \forall p \in P\,,\,\forall v \in V_0\,,
\end{equation}
\begin{equation}\label{sec3equ2}
W^i_v(gp)=W^i_{\rho(p)v}(g)\,,\, \forall g \in G_n\,,\, \forall p \in P\,,\, \forall v \in V_0\,,\, i=1,2\,.
\end{equation}

We recall the decomposition of $G_n$ into double cosets of $U$ and $P$ as in \cite{Ch06}:
\[G_n = \dot\bigcup_{i=0}^{n-1}U \alpha^i P\,,\]
where 
$$\alpha=\begin{pmatrix}
0 & I_{n-1}\\
1 & 0
\end{pmatrix}\,.$$
Note that $\alpha^i=\begin{pmatrix}
0 & I_{n-i}\\
I_i & 0
\end{pmatrix}$, in particular, $\alpha^0=\alpha^n=I_n$. 

\begin{defn}\label{defn}
For each double coset $U \alpha^i P$, $0 \leq i \leq n-1$, we call $i$ the {\it height} of the double coset. We say that $\pi_1$ and $\pi_2$ {\it agree} at height $i$ if 
$$
W^1_v(g)=W^2_v(g)\,,\, \forall g \in U\alpha^i P\,,\, \forall v \in V_0\,.$$
\end{defn}

By \eqref{sec3equ1}, $\pi_1$ and $\pi_2$ agree at height 0.
The following lemma gives a characterization of $\pi_1$ and $\pi_2$ agreeing at height $i$.

\begin{lem}[\cite{Ch06}, Lemma 3.1]\label{lem5}
$\pi_1$ and $\pi_2$ agree at height $i$
if and only if
 $$W^1_v(\alpha^i)=W^2_v(\alpha^i)\,,\, \forall v \in V_0\,.$$
\end{lem}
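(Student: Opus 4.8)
The plan is to prove both implications of the stated equivalence, noting that the forward direction is essentially trivial.

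\textbf{The forward direction.} Suppose $\pi_1$ and $\pi_2$ agree at height $i$, i.e. $W^1_v(g) = W^2_v(g)$ for all $g \in U\alpha^i P$ and all $v \in V_0$. Since $\alpha^i \in U\alpha^i P$ (taking the unipotent part to be $I_n$ and the $P$-part to be $I_n$), in particular $W^1_v(\alpha^i) = W^2_v(\alpha^i)$ for all $v \in V_0$. So this direction is immediate.

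\textbf{The reverse direction.} This is the substantive part. Assume $W^1_v(\alpha^i) = W^2_v(\alpha^i)$ for all $v \in V_0$; we must show $W^1_v(g) = W^2_v(g)$ for all $g \in U\alpha^i P$. Write a general element of $U\alpha^i P$ as $g = u\,\alpha^i p$ with $u \in U$ and $p \in P$. First I would use the equivariance under $P$ on the right: by \eqref{sec3equ2}, $W^i_v(\alpha^i p) = W^i_{\rho(p)v}(\alpha^i)$, and since $\rho(p)v \in V_0$ (as $V_0$ is a $P$-module under right translation), the hypothesis applied to the vector $\rho(p)v$ gives $W^1_v(\alpha^i p) = W^1_{\rho(p)v}(\alpha^i) = W^2_{\rho(p)v}(\alpha^i) = W^2_v(\alpha^i p)$. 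Next I would handle the left $U$-translation: for $u \in U$, $W^i_v(u\alpha^i p) = \psi_U(u)\, W^i_v(\alpha^i p)$ by the Whittaker transformation property, and since the scalar $\psi_U(u)$ is the same for both, we get $W^1_v(u\alpha^i p) = \psi_U(u) W^1_v(\alpha^i p) = \psi_U(u) W^2_v(\alpha^i p) = W^2_v(u\alpha^i p)$. Combining the two steps covers every element of $U\alpha^i P$, completing the proof.

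\textbf{Main obstacle.} There is essentially no hard analytic or structural obstacle here; the only point requiring a small amount of care is verifying that $V_0 = \mathrm{ind}_{ZU}^P \omega\psi_U$ is stable under right translation $\rho(p)$ for $p \in P$ — but this is immediate from the definition of an induced representation (the right regular action of $P$ on $\mathrm{ind}_{ZU}^P \omega\psi_U$ is precisely its module structure). One should also note that the Whittaker property $W^i_v(ug) = \psi_U(u) W^i_v(g)$ used for $u \in U$ is exactly the defining property recalled in Section 2.1, valid for all $g \in G_n$. So the lemma reduces to bookkeeping with the two displayed identities \eqref{sec3equ1}–\eqref{sec3equ2} together with the left $U$-equivariance of Whittaker functions; the content is really just that $\alpha^i$ is a representative of the double coset and the representation $V_0$ is a $P$-module.
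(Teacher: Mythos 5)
Your proof is correct, and it is the standard argument for this lemma. Since the paper cites the result from \cite{Ch06} without reproducing the proof, there is nothing to compare against in the present text; but your decomposition $g = u\,\alpha^i p$, followed by right $P$-equivariance via \eqref{sec3equ2} (noting that $V_0 = \mathrm{ind}_{ZU}^P\omega\psi_U$ is a $P$-module so $\rho(p)v \in V_0$) and then left $U$-equivariance via the Whittaker transformation property, is precisely the intended reasoning and matches Chen's argument.
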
 

The following proposition is one of the main ingredients for this paper. 

\begin{lem}[\cite{Ch06}, Proposition 3.1]\label{lem7}
Let $t$ with $1 \leq t \leq n-1$. If $\pi_1$ and $\pi_2$ satisfy hypothesis $\CH_t$, then
they agree at height $t$. 
\end{lem}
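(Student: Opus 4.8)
The statement to be proved is Lemma~\ref{lem7}: if $\pi_1$ and $\pi_2$ satisfy hypothesis $\CH_t$ for some $t$ with $1 \le t \le n-1$, then they agree at height $t$. By Lemma~\ref{lem5}, agreeing at height $t$ is equivalent to the pointwise identity $W^1_v(\alpha^t) = W^2_v(\alpha^t)$ for all $v \in V_0$. So the plan is to reduce the hypothesis on equality of gamma factors (for all irreducible generic $\tau$ of $G_t$) to exactly this pointwise statement.

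First I would unwind what $\CH_t$ gives us via the Rankin--Selberg functional equation \eqref{fe} applied with the representation $\tau$ of $G_t$ and $n - t$ playing the role of the ``$n-t$" in the setup of Section~2.2. Concretely, one uses the integrals $\Psi(s, W^i, W'; j)$ with $j$ ranging over $0 \le j \le n-t-1$; the equality $\gamma(s,\pi_1\times\tau,\psi) = \gamma(s,\pi_2\times\tau,\psi)$ together with the functional equation should translate (after clearing $L$-factors and the $\omega_\tau(-1)^{n-1}$ constant) into an equality of zeta integrals for $\pi_1$ and $\pi_2$ for a suitable choice of $j$. The key trick, following Chen's argument in \cite{Ch06}, is to feed in Whittaker functions $W^i_v$ built from $v \in V_0$, using \eqref{sec3equ1}, \eqref{sec3equ2}: since $W^1_v$ and $W^2_v$ agree on all of $P$, the two sides of the functional equation for $j = 0$ agree automatically, which forces the remaining ``$k = n-t-1$" side — the one involving $\rho(\omega_{n,t})\wt{W^i}$ — to agree as well. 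Then I would apply Lemma~\ref{lem4}: the equality of those zeta integrals for all generic $\tau$ of $G_{n-t-1}$ (note the bookkeeping $t' \le n-2$, $j' $ with $0 \le j' \le t'$ in the statement of Lemma~\ref{lem4}, which matches up after relabeling) yields equality of the integrated Whittaker functions $\int W^1_v(\cdots) dX = \int W^2_v(\cdots) dX$ over the appropriate matrix block.

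The final step is to extract from this integral identity the pointwise value at $\alpha^t$. Here one chooses $v \in V_0$ cleverly so that the integral over the unipotent block $X$ collapses — because $v$ is compactly supported modulo $ZU$ in $P$, the support of the integrand in $X$ is compact, and by shrinking the support of $v$ one can arrange that the integral picks out essentially a single point, namely the value of $W^i_v$ at $\alpha^t$ (up to a nonzero volume factor depending only on $v$, hence the same for $i=1,2$). Running this for all $v \in V_0$ then gives $W^1_v(\alpha^t) = W^2_v(\alpha^t)$ for all $v$, and Lemma~\ref{lem5} finishes the proof.

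\textbf{Main obstacle.} The delicate point is the precise matching of indices and the correct choice of the parameter $j$ in the functional equation \eqref{fe} so that the ``$P$-side" is the one that agrees trivially and the ``$\omega_{n,t}$-side" is the one that Lemma~\ref{lem4} can digest; in particular one must check that $\rho(\omega_{n,t})\wt{W^i_v}$ restricted to the relevant parabolic still has the form needed to apply Lemma~\ref{lem4}, and that the bound $t \le n-1$ in Lemma~\ref{lem7} is compatible with the bound $t \le n-2$ appearing in Lemma~\ref{lem4} (the edge case $t = n-1$ may need separate, easy treatment since then $G_{n-t-1} = G_0$ is trivial and the statement reduces directly to $W^1_v(\alpha^{n-1}) = W^2_v(\alpha^{n-1})$, which should follow from $\CH_{n-1}$ more or less by Henniart's argument). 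Getting the change of variables relating $\alpha^t$, $\omega_{n,t}$, and the unipotent block in \eqref{zeta} exactly right is where the real care is needed; everything else is a routine combination of the cited results.
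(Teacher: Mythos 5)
Your high-level plan is right --- reduce via Lemma~\ref{lem5} to $W^1_v(\alpha^t)=W^2_v(\alpha^t)$, use $\CH_t$ and the functional equation~\eqref{fe}, then invoke Lemma~\ref{lem4} --- but you pick the wrong side of the functional equation. You observe that the integrand of $\Psi(s,W,W';j)$ lies in $P$ only when $j=0$. In fact it lies in $P$ for \emph{every} $j$ with $0\le j\le n-t-1$: in~\eqref{zeta} the bottom-right block is $I_{k+1}$ with $k+1\ge 1$, so the last row of the matrix is always $(0,\dots,0,1)$, and the matrix is in $P$. Since $W^1_v=W^2_v$ on $P$, one already has $\Psi(s,W^1_v,W';j)=\Psi(s,W^2_v,W';j)$ for \emph{all} $0\le j\le n-t-1$. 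The useful choice is therefore $j=n-t-1$, so that $k=0$ on the dual side. Then~\eqref{fe} together with $\CH_t$ gives, for all generic $\tau$ of $G_t$ and all $W'$,
$$\Psi(1-s,\rho(\omega_{n,t})\wt{W^1_v},\wt{W'};0)=\Psi(1-s,\rho(\omega_{n,t})\wt{W^2_v},\wt{W'};0),$$
and Lemma~\ref{lem4} applied with $j=0$ (and its ``$t$'' parameter equal to $n-t-1$, so the twisting group $G_{n-t'-1}$ is $G_t$; your bookkeeping that calls the twisting representations ``$\tau$ of $G_{n-t-1}$'' misidentifies this) yields the \emph{pointwise} identity $\rho(\omega_{n,t})\wt{W^1_v}(I_n)=\rho(\omega_{n,t})\wt{W^2_v}(I_n)$ directly, with no residual $X$-integral. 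One then computes $\rho(\omega_{n,t})\wt{W^i_v}(I_n)=W^i_v(\omega_n\omega_{n,t})$ and checks $\omega_n\omega_{n,t}=\alpha^t\begin{pmatrix}\omega_t&0\\0&I_{n-t}\end{pmatrix}$, with the second factor in $P$, so that~\eqref{sec3equ2} gives $W^1_v(\alpha^t)=W^2_v(\alpha^t)$ for all $v\in V_0$, and Lemma~\ref{lem5} finishes.

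By instead taking $j=0$, $k=n-t-1$, you land on the side $\Psi(1-s,\cdot\,;n-t-1)$, and Lemma~\ref{lem4} then produces only an \emph{integral} identity over $X\in M_{(n-t-1)\times t}(F)$, not a pointwise one. Your proposed fix --- shrink the support of $v$ so the $X$-integral ``picks out a single point up to a volume factor'' --- does not work. Unwinding the definitions, the integrand equals $W^i_{\rho(p_X)v}(\alpha^t)$ for some $p_X\in P$ depending on $X$; shrinking the support of $v$ in $P/ZU$ gives you no localization of this quantity as a function of $X$, and in general no such collapse occurs. Extracting a pointwise value from precisely this sort of integral identity is the genuinely hard technical content of Section~4 (Lemma~\ref{lem3}), and the paper only needs it for Proposition~\ref{prop3}; Lemma~\ref{lem7} is exactly the situation where the clean choice $j=n-t-1$ lets you bypass that machinery. (Also, the $t=n-1$ case needs no separate treatment via Henniart's argument: then $n-t-1=0$, so $j=k=0$, the $X$-block is empty, and the argument above goes through verbatim.)
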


To proceed, we give a characterization of the matrices in the double coset $P \alpha^s U$, $0 \leq s \leq n-1$.

\begin{lem}\label{lem1}
Suppose $0 \leq s \leq n-1$, Then $g \in P \alpha^s U$ if and only if the last row of $g$ has the form 
$$(0, \ldots, 0, a_{s}, a_{s+1}, \ldots, a_n)\,,\, a_{s} \neq 0\,.$$
\end{lem}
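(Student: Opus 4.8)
The plan is to describe the double coset $P\alpha^s U$ by the action of $P$ on the left and $U$ on the right on the last row of a matrix $g\in G_n$, since $P$ is precisely the stabilizer (up to a scalar from $G_1$) of the flag condition that determines the last row up to the relevant moves. First I would compute the last row of $\alpha^s$ itself: since $\alpha^s=\begin{pmatrix} 0 & I_{n-s}\\ I_s & 0\end{pmatrix}$, its last row is the $(n,\cdot)$ row of this block matrix, namely the row vector with a single $1$ in column $s$ and zeros elsewhere, i.e. $(0,\dots,0,1,0,\dots,0)$ with the $1$ in position $s$. (Here one must be slightly careful with the convention $\alpha^0=\alpha^n=I_n$, for which the last row is $(0,\dots,0,1)$, matching $s=n$, consistent with the statement if one reads $a_n\ne0$ for $s=n$; but the lemma only claims $0\le s\le n-1$, so the relevant case is the $1$ in column $s\le n-1$.)

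Next I would analyze how left multiplication by $p\in P$ and right multiplication by $u\in U$ change this last row. Left multiplication by $p\in P$: writing $p=\begin{pmatrix} h & v\\ 0 & d\end{pmatrix}$ with $h\in G_{n-1}$, $d\in G_1=F^\times$, the last row of $pg$ is $d$ times the last row of $g$. So left $P$-multiplication scales the last row by a nonzero constant and does nothing else. Right multiplication by $u\in U_n$ (upper unipotent): the last row of $gu$ is (last row of $g$)$\cdot u$; since $u$ is upper triangular unipotent, this operation takes a row $(0,\dots,0,a_s,a_{s+1},\dots,a_n)$ with $a_s\ne0$ and replaces $a_m$ (for $m>s$) by $a_m$ plus an $F$-linear combination of $a_s,\dots,a_{m-1}$, while leaving the leading zeros and the first nonzero entry $a_s$ untouched. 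Hence the property ``last row is of the form $(0,\dots,0,a_s,a_{s+1},\dots,a_n)$ with $a_s\ne0$'' is preserved under both operations, and conversely, given any such row, one can use a suitable $u\in U_n$ to clear $a_{s+1},\dots,a_n$ and a suitable scalar in $P$ to rescale $a_s$ to $1$, thereby moving $g$ into the $P$-$U$ orbit of $\alpha^s$. This proves both implications.

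To organize this cleanly I would argue: ($\Rightarrow$) if $g\in P\alpha^s U$, write $g=p\alpha^s u$ and run the last-row computation above to see the last row has the asserted shape. ($\Leftarrow$) if the last row of $g$ has that shape, first right-multiply by $u_1\in U_n$ to kill $a_{s+1},\dots,a_n$ (possible because $u_1$ acts on the last row by column operations adding left-columns to right-columns, exactly what is needed to clear entries to the right of the pivot $a_s$), obtaining $g u_1$ whose last row is $a_s e_s^{\,T}$; then left-multiply by $\mathrm{diag}(I_{n-1},a_s^{-1})\in P$ to normalize, obtaining a matrix $g'$ whose last row equals $e_s^{\,T}$, the last row of $\alpha^s$; finally observe that $g'(\alpha^s)^{-1}$ has last row $e_n^{\,T}$ — need to check this — which exactly says $g'(\alpha^s)^{-1}\in P$ (a matrix lies in $P$ iff its last row is a nonzero multiple of $e_n^{\,T}$), hence $g'\in P\alpha^s$ and $g\in P\alpha^s U$.

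The main obstacle I expect is purely bookkeeping: getting the index conventions exactly right in the claim ``the last row of $g(\alpha^s)^{-1}$ is $e_n^{\,T}$ provided the last row of $g$ is $e_s^{\,T}$'', i.e. verifying that $(\alpha^s)^{-1}=\alpha^{n-s}$ sends $e_s$ to $e_n$ under the appropriate one-sided multiplication, and similarly pinning down precisely which entries $u\in U_n$ can alter in the last row (only those strictly to the right of the pivot column $s$, never the leading zeros or the pivot itself). None of this is deep, but it is the kind of computation where an off-by-one in the block decomposition of $\alpha^s$ would invalidate the statement, so the write-up should exhibit the last row of $\alpha^s$ explicitly and state the two elementary facts about left-$P$ and right-$U$ action on last rows as the load-bearing observations.
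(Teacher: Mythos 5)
Your proof is correct and takes essentially the same approach as the paper: both reduce to the observations that the last row of $\alpha^s$ is $e_s^{\,T}$, that left multiplication by $P$ merely scales the last row, and that right multiplication by $U$ alters only entries strictly to the right of the pivot column. The paper's proof is a terse three sentences stating exactly these facts; your write-up is a fuller, but faithful, expansion of it.
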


\begin{proof}
Recall that
$$\alpha^{s} = \begin{pmatrix}
0 & I_{n-s}\\
I_{s} & 0
\end{pmatrix}\,.$$
It is clear that the last row of any matrix in $P \alpha^{s}$  has the form 
$$(0, \ldots, 0, a_{s}, 0, \ldots, 0)\,,$$
where $a_s \neq 0$ occurs in the $s$-th column of the matrix.
After multiplying by matrices in $U$ from the right, one can see that  
last row of any matrix in $P \alpha^{s} U$  has the form $(0, \ldots, 0, a_{s}, a_{s+1}, \ldots, a_n)$, with $a_{s} \neq 0$. 
\end{proof}

In fact, this lemma gives at once the decomposition in the disjoint double cosets
\[G_n = \dot\bigcup_{i=0}^{n-1}P \alpha^i U = \dot\bigcup_{i=0}^{n-1}U \alpha^i P\,.\]

The next lemma is a generalization of \cite[Lemma 3.2]{Ch06}. 

\begin{lem}\label{lem2}
Let $t$ with $[\frac{n}{2}] \leq t \leq n-2$. Suppose that for any $s$ with $0 \leq s \leq t$ the representations $\pi_1$ and $\pi_2$ agree at height $s$. Then the following equality holds
for all $X \in M_{(n-t-1) \times (2t+2-n)}(F)$, all $g \in G_{n-t-1}$, and all $v \in V_0$:
\begin{equation*}
W_v^1 \begin{pmatrix}
I_{n-t-1} & 0 & 0\\
0 & I_{2t+2-n} & 0\\
0 & X & g
\end{pmatrix}=W_v^2 \begin{pmatrix}
I_{n-t-1} & 0 & 0\\
0 & I_{2t+2-n} & 0\\
0 & X & g
\end{pmatrix}\,.
\end{equation*}
\end{lem}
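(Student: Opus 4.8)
The plan is to run an induction on the block parameter, peeling off one column of the unipotent part at a time, and to convert ``agreeing at height $s$'' into information about Whittaker functions evaluated on matrices whose last row is controlled by Lemma~\ref{lem1}. First I would set $m = 2t+2-n$, so that $0 \le m \le t$ since $[\frac n2] \le t \le n-2$, and rewrite the target matrix as
\[
h(X,g) = \begin{pmatrix} I_{n-t-1} & 0 & 0 \\ 0 & I_m & 0 \\ 0 & X & g \end{pmatrix},
\qquad X \in M_{(n-t-1)\times m}(F),\ g \in G_{n-t-1}.
\]
The key observation is that the last row of $h(X,g)$ is $(0,\dots,0, \text{(last row of }g))$, supported in the last $n-t-1$ coordinates, so by Lemma~\ref{lem1} $h(X,g)$ lies in $P\alpha^s U$ for some $s$ with $t+1 \le s \le n-1$ when $g$ is generic; but that is the wrong range. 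The right move instead is to fix $g$ and $X$ and hit $h(X,g)$ on the left by the embedded Weyl/permutation elements $\alpha^0,\alpha^1,\dots,\alpha^t$ coming from the $G_n$ decomposition, or rather to use \eqref{sec3equ2} together with Lemma~\ref{lem5} to transfer the hypothesis ``agree at height $s$ for $0\le s\le t$'' into equalities of $W^i_v$ on a family of matrices obtained by right translating $\alpha^s$ by elements of $V_0$. Concretely, since $W^i_v(\alpha^s p) = W^i_{\rho(p)v}(\alpha^s)$ and $\pi_1,\pi_2$ agree at height $s$, we get $W^1_v = W^2_v$ on all of $U\alpha^s P$, and the matrices $h(X,g)$ with $g$ running over a suitable Bruhat cell of $G_{n-t-1}$ decompose according to which $U\alpha^s P$ they fall in.

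Second, I would argue by descending induction on the number of ``active'' columns in $X$. Decompose $g\in G_{n-t-1}$ via its own Bruhat decomposition $g = u_1 w_j u_2$ with $w_j$ ranging over permutation matrices; when $g$ lies in the big cell of $G_{n-t-1}$, the matrix $h(X,g)$ has last row with leading nonzero entry in column $n-t$, hence $h(X,g) \in P\alpha^{n-t-1}U$, and $n-t-1 \le t$ precisely because $t \ge [\frac n2]$. This is the crucial inequality that makes the argument work: the hypothesis covers heights up to $t$, and $n-t-1 \le t$. For $g$ in smaller cells the leading entry moves right, landing in $P\alpha^s U$ with $s > n-t-1$ but still $\le t$ as long as the nonzero part of the last row of $g$ starts early enough; a short case analysis using that $g$ is invertible (so its last row is nonzero) shows every relevant $h(X,g)$ lands in $\bigcup_{s=0}^{t} P\alpha^s U$. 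On each such coset the hypothesis gives $W^1_v = W^2_v$, and I would then integrate/average over the entries of $X$ exactly as in Lemma~\ref{lem4}'s reduction (there is a compact-support phenomenon in $X$ coming from the Whittaker function decaying on the torus) to remove the dependence on $X$ and land on the stated identity with $X$ free.

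Third, to pass from ``equality on a dense set of cells'' to equality for \emph{all} $X$ and \emph{all} $g$ (including the non-generic $g$ whose image cell might a priori exceed height $t$), I would use smoothness of the Whittaker functions: the set $\{(X,g): W^1_v(h(X,g)) = W^2_v(h(X,g))\}$ is closed, and the generic locus (big Bruhat cell times all $X$) is dense in $M_{(n-t-1)\times m}(F)\times G_{n-t-1}$, so the equality extends by continuity. One must check that $W^i_v$ really is locally constant as a function of the relevant matrix entries — this is immediate since $v \in V_0$ is smooth and $h(X,g)$ depends polynomially on $(X,g)$ — and that the ``active'' cells genuinely exhaust a dense open set, which is where the hypothesis $t \le n-2$ (guaranteeing $n-t-1 \ge 1$, so there is room to maneuver) and $t \ge [\frac n2]$ (guaranteeing $n-t-1 \le t$) are both used.

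\textbf{Main obstacle.} The hard part will be the bookkeeping in the second step: correctly identifying, for each Bruhat cell of the $G_{n-t-1}$-block, which double coset $P\alpha^s U$ the full $n\times n$ matrix $h(X,g)$ falls into, and verifying that the presence of the $I_m$ block and the off-diagonal $X$ block does not push $s$ beyond $t$. The inequality $n-t-1\le t$ is the linchpin and everything must be arranged so that only heights $\le t$ are ever invoked; I expect the delicate point is handling the ``boundary'' cells where $s$ equals $t$ exactly, and making sure the compact-support-in-$X$ averaging argument (inherited from Lemma~\ref{lem4}) is compatible with the cell decomposition rather than smearing a single cell into several. A secondary subtlety is that agreement at intermediate heights $s < t$ is used, not just at $s=t$, so the induction hypothesis of the lemma — agreement at \emph{every} height $\le t$ — is essential and cannot be weakened.
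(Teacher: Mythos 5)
The central problem is a conflation of the two distinct double-coset decompositions $G_n=\dot\bigcup P\alpha^s U$ and $G_n=\dot\bigcup U\alpha^i P$. Lemma~\ref{lem1} tells you which $P\alpha^s U$ coset a matrix lies in (by looking at its last row), but the hypothesis ``$\pi_1,\pi_2$ agree at height $i$'' is an equality of Whittaker functions on $U\alpha^i P$. These are genuinely different partitions of $G_n$: for instance, already for $n=3$ the matrix with rows $(1,0,0)$, $(0,1,0)$, $(1,0,1)$ lies in $P\alpha^1 U$ but in $U\alpha^2 P$. In the second step of your argument you determine (for $g$ in the big cell) that $h(X,g)\in P\alpha^{s}U$ for some $s$, and then say ``on each such coset the hypothesis gives $W^1_v=W^2_v$'' --- but the hypothesis says nothing directly about $P\alpha^s U$ cosets, so that step does not go through.

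The missing idea is to apply Lemma~\ref{lem1} not to $h(X,g)$ but to its \emph{inverse}
\[
h(X,g)^{-1}=\begin{pmatrix} I_{n-t-1}&0&0\\ 0&I_{2t+2-n}&0\\ 0&-g^{-1}X&g^{-1}\end{pmatrix},
\]
whose last row is $(0_{n-t-1},\,*,\,*)$, hence has its first nonzero entry in some column $i\ge n-t$. Lemma~\ref{lem1} then gives $h(X,g)^{-1}\in P\alpha^i U$, and taking inverses yields $h(X,g)\in U\alpha^{n-i}P$ with $n-i\le t$; this is exactly the range where the agreement hypothesis applies, and the lemma follows in one step, for \emph{all} $X$ and \emph{all} $g$, with no case analysis, no Bruhat decomposition of $g$, no averaging in $X$, and no density/continuity extension. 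A secondary issue in your write-up: the last row of $h(X,g)$ is $(0_{n-t-1},\ \text{last row of }X,\ \text{last row of }g)$, so it is not supported in the last $n-t-1$ columns unless the last row of $X$ vanishes; and a leading nonzero entry in column $n-t$ would by Lemma~\ref{lem1} place the matrix in $P\alpha^{n-t}U$, not $P\alpha^{n-t-1}U$ as you wrote. But these slips are peripheral --- the essential missing ingredient is the inversion trick that converts a $P$--$U$ statement into a $U$--$P$ statement.
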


\begin{proof}
First note that the hypothesis $[\frac{n}{2}] \leq t \leq n-2$ implies that $n-t-1 \geq 1$ and $2t+2-n \geq 1$. 

Let $A = \begin{pmatrix}
I_{n-t-1} & 0 & 0\\
0 & I_{2t+2-n} & 0\\
0 & X & g
\end{pmatrix}$, where $X \in M_{(n-t-1) \times (2t+2-n)}(F)$, and $g \in G_{n-t-1}$. Then $A^{-1} = \begin{pmatrix}
I_{n-t-1} & 0 & 0\\
0 & I_{2t+2-n} & 0\\
0 & -g^{-1}X & g^{-1}
\end{pmatrix}$. By Lemma \ref{lem1}, $A^{-1} \in P \alpha^i U$, where $i \geq n-t$, hence, $A \in U \alpha^{n-i} P$ with $n-i \leq t$. Since $\pi_1$ and $\pi_2$ agree at heights $0, 1, 2, \ldots, t$, $W_v^1$ and $W_v^2$ agree on $A$, for any $v \in V_0$.  

This completes the proof of the lemma.
\end{proof}

The following proposition allows us to prove Theorem \ref{main}    
 inductively. 

\begin{prop}\label{prop3}
Assume that $\pi_1$ and $\pi_2$ satisfy hypothesis $\CH_{\leq [\frac{n}{2}]}$. Let $t$ with $[\frac{n}{2}] \leq t \leq n-2$. Suppose that for any $s$ with $0 \leq s \leq t$, the representations $\pi_1$ and $\pi_2$ agree at height $s$. Then they agree at height $t+1$.  
\end{prop}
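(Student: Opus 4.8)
\textbf{Proof proposal for Proposition \ref{prop3}.}

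The plan is to show that $\pi_1$ and $\pi_2$ agree at height $t+1$ by using Lemma \ref{lem5}: it suffices to prove $W_v^1(\alpha^{t+1}) = W_v^2(\alpha^{t+1})$ for all $v \in V_0$. The strategy, following \cite{Ch06}, is to relate the value at $\alpha^{t+1}$ to a Rankin--Selberg zeta integral with a twisting representation $\tau$ of $G_{n-t-1}$, apply the functional equation \eqref{fe} together with hypothesis $\CH_{n-t-1}$ (note that since $[\frac{n}{2}] \leq t \leq n-2$ we have $1 \leq n-t-1 \leq [\frac{n}{2}]$, so $\CH_{n-t-1}$ is among the hypotheses we may assume), and then compare the two sides. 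More precisely, I would set $r = n-t-1$ and consider, for $W^i \in \CW(\pi_i,\psi)$ suitably chosen translates of $W_v^i$, the integrals $\Psi(s, W^1, W'; j)$ and $\Psi(s, W^2, W'; j)$ for an appropriate $j$ (likely $j = 2t+2-n$ or a related value dictated by the block sizes in Lemma \ref{lem2}).

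The key steps, in order, are as follows. First, using \eqref{sec3equ2}, reduce the claim at $\alpha^{t+1}$ to an identity of Whittaker values on a family of matrices obtained by right-translating $\alpha^{t+1}$ by elements of $P$; by the Bruhat-type analysis this family should be expressible in terms of matrices of the shape appearing in Lemma \ref{lem2}, i.e. with an embedded $G_{n-t-1}$-block in the lower-right corner and an $M_{(n-t-1)\times(2t+2-n)}$-block of ``garbage'' entries $X$. Second, invoke Lemma \ref{lem2}: since $\pi_1$ and $\pi_2$ agree at heights $0,\dots,t$, the Whittaker functions $W_v^1$ and $W_v^2$ already agree on all such matrices. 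Third — and this is where the gamma factors enter — express the needed value $W_v^i(\alpha^{t+1})$ via the functional equation \eqref{fe}: write $\Psi(1-s,\rho(\omega_{n,r})\wt{W^i},\wt{W'};k)$ in terms of $\gamma(s,\pi_i\times\tau,\psi)$ and $\Psi(s,W^i,W';j)$, observe that the $\gamma$-factors coincide by $\CH_{n-t-1}$, and that the integrals $\Psi(s,W^i,W';j)$ on the ``known'' side agree by Step 2 (via Lemma \ref{lem4}, which converts the zeta-integral equality into an equality of the relevant $X$-integrated Whittaker values). Fourth, extract from the resulting equality of rational functions of $q^{-s}$ — interpreting both sides as formal Laurent series and matching coefficients, exactly as in the proof of Lemma \ref{lem4} and as in \cite[Section 3]{H93}, \cite[Section 2]{Ch06} — the single value $W_v^1(\alpha^{t+1}) = W_v^2(\alpha^{t+1})$. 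Finally, apply Lemma \ref{lem5} to conclude agreement at height $t+1$.

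The main obstacle I anticipate is Step 3: bookkeeping the precise matrix identity that lets one read off $W_v^i(\alpha^{t+1})$ from the $(1-s)$-side $\Psi(1-s,\rho(\omega_{n,r})\wt{W^i},\wt{W'};k)$ of the functional equation. One must choose the translating element so that $\rho(\omega_{n,r})\wt{W^i}$ evaluated along the $G_r$-integration variable picks out exactly the $\alpha^{t+1}$-value (up to a factor from $\omega_\tau$ and $|\det g|$), while simultaneously ensuring the ``other'' side $\Psi(s,W^i,W';j)$ lands in the regime controlled by Lemma \ref{lem2} so that the two representations agree there. Getting the block-size indices $j$ and $k = n-t-1-j$ to line up correctly with the $2t+2-n$ appearing in Lemma \ref{lem2}, and verifying the support/compactness statement needed to pass from equality of zeta integrals to equality of Fourier--Laurent coefficients (which is presumably deferred to Section 4, as the excerpt indicates Proposition \ref{prop3} is proved there), is the delicate part; the spectral-theory input on $L^2(U_{r}\backslash G_r^0)$ from Lemma \ref{lem4} should then finish it. Everything else is a formal consequence of the functional equation \eqref{fe}, Lemma \ref{lem4}, Lemma \ref{lem2}, and Lemma \ref{lem5}.
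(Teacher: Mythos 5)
Your outline correctly identifies the main moves: use Lemma~\ref{lem2} to get agreement on the matrices with a lower-right $G_{n-t-1}$-block and a middle $M_{(n-t-1)\times(2t+2-n)}$-block, convert this into an equality of $\Psi(1-s,\cdot)$ integrals against all $\tau$ on $G_{n-t-1}$, pass through the functional equation~\eqref{fe} using hypothesis $\CH_{n-t-1}$ (legitimate since $1\leq n-t-1\leq[\frac{n}{2}]$), land on an equality of $\Psi(s,\cdot)$ integrals, and then apply Lemma~\ref{lem4}. The index bookkeeping you flag in your Step~3 is what the paper does explicitly, via the factorization
\[
\omega_n=\begin{pmatrix}\omega_{n-t-1}&0\\0&I_{t+1}\end{pmatrix}\omega_{n,n-t-1}\,\alpha^{t+1},
\]
setting $X_v^i=\rho(\alpha^{t+1})W_v^i$ and recognizing the resulting identity as $\Psi(1-s,\rho(\omega_{n,n-t-1})\wt{X_v^i},\wt{W_\tau};2t+2-n)$, whose functional-equation partner on the $s$-side has index $j=n-t-2$. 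That portion of your plan is sound.

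However, your Step~4 has a genuine gap. After Lemma~\ref{lem4} with $j=n-t-2$, you do \emph{not} obtain the pointwise equality $X_v^1(I_n)=X_v^2(I_n)$; you obtain only the integrated identity
\[
\int_{M_{(n-t-2)\times(n-t-1)}(F)} X_v^1\begin{pmatrix}I_{n-t-1}&0&0\\ X&I_{n-t-2}&0\\0&0&I_{2t+3-n}\end{pmatrix}dX
=\int_{M_{(n-t-2)\times(n-t-1)}(F)} X_v^2\begin{pmatrix}I_{n-t-1}&0&0\\ X&I_{n-t-2}&0\\0&0&I_{2t+3-n}\end{pmatrix}dX
\]
for all $v\in V_0$. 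The Laurent-series and spectral-theory input you invoke is already absorbed into the statement of Lemma~\ref{lem4}; it does not get you past this integral. In Chen's case $t=n-2$ one has $n-t-2=0$, so the $X$-integration is vacuous and the conclusion is immediate --- which is exactly why the prior literature stopped at $t=n-2$. For general $t<n-2$ you must still remove the nontrivial integration over $M_{(n-t-2)\times(n-t-1)}(F)$, and this is precisely the content of the paper's Lemma~\ref{lem3}: one exploits that the integral identity holds for \emph{all} $v\in V_0$, hence is stable under $\rho(p)$ for $p\in U\cap\alpha^{t+1}P\alpha^{-(t+1)}$, decomposes the $X$-space into root subgroups $X_a$ and $Z_b$ that are dual (under $\psi_U$ and conjugation) to subgroups $Y_a$ and $T_b$ of $U\cap\alpha^{t+1}P\alpha^{-(t+1)}$, and strips away the $X$-variables one subgroup at a time by an iterated Fourier inversion. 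This lemma is the new technical content that extends the argument beyond $t=n-2$, and your proposal omits it entirely; without it, the proof does not close.
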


Before proving the proposition, we apply it to the proof of our main result as follows.

\textbf{Proof of Theorem \ref{main}}. 
Assume that $\pi_1$ and $\pi_2$ satisfy hypothesis $\CH_{\leq [\frac{n}{2}]}$. By Lemma \ref{lem7}, $\pi_1$ and $\pi_2$ agree at heights $1, 2, \ldots, [\frac{n}{2}]$. Note that by \eqref{sec3equ1}, $\pi_1$ and $\pi_2$ already agree at height 0.
Applying Proposition \ref{prop3} repeatedly for $t$ from $[\frac{n}{2}]$ to $n-2$, we obtain that $\pi_1$ and $\pi_2$ also agree at heights $[\frac{n}{2}]+1, \ldots, n-1$. 
Hence, $\pi_1$ and $\pi_2$ agree at all the heights $0, 1, \ldots, n-1$, that is, $W^1_v(g)=W^2_v(g)$, for all $g \in G_n$ and for all $v \in V_0$. Therefore, $\pi_1 \cong \pi_2$. 
This concludes the proof of Theorem \ref{main}. \qed

Therefore, we only need to prove Proposition \ref{prop3}, which will be done in Section 4.

\section{Proof of Proposition \ref{prop3}}

In this section, we prove Proposition \ref{prop3}.

\textbf{Proof of Proposition \ref{prop3}}.
By Lemma \ref{lem2}, 
\begin{equation*}
W_v^1 \begin{pmatrix}
I_{n-t-1} & 0 & 0\\
0 & I_{2t+2-n} & 0\\
0 & X & g
\end{pmatrix}=W_v^2 \begin{pmatrix}
I_{n-t-1} & 0 & 0\\
0 & I_{2t+2-n} & 0\\
0 & X & g
\end{pmatrix}
\end{equation*}
holds for all $X \in M_{(n-t-1) \times (2t+2-n)}(F)$, all $g \in G_{n-t-1}$, and all $v \in V_0$. Fix any pair $(X,g)$. Then, 
\begin{equation*}
W_v^1 \left(\omega_n \omega_n \begin{pmatrix}
I_{n-t-1} & 0 & 0\\
0 & I_{2t+2-n} & 0\\
0 & X & g
\end{pmatrix}\right)=W_v^2 \left(\omega_n \omega_n \begin{pmatrix}
I_{n-t-1} & 0 & 0\\
0 & I_{2t+2-n} & 0\\
0 & X & g
\end{pmatrix}\right)\,,
\end{equation*}
that is, 
\begin{equation*}
W_v^1 \left(\omega_n \begin{pmatrix}
g_1 & X_1 & 0\\
0 & I_{2t+2-n} & 0\\
0 & 0 & I_{n-t-1}
\end{pmatrix}\omega_n\right)=W_v^2 \left(\omega_n \begin{pmatrix}
g_1 & X_1 & 0\\
0 & I_{2t+2-n} & 0\\
0 & 0 & I_{n-t-1}
\end{pmatrix}\omega_n\right)\,,
\end{equation*}
where $g_1 = \omega_{n-t-1} g \omega_{n-t-1}$, $X_1=\omega_{n-t-1} X \omega_{2t+2-n}$. 

Note that 
$$\omega_n = 
\begin{pmatrix}
\omega_{n-t-1} & 0\\
0 & I_{t+1}
\end{pmatrix} \omega_{n,n-t-1} \alpha^{t+1}\,.$$
Recall that 
$$\omega_{n,n-t-1}=\begin{pmatrix}
I_{n-t-1} & 0\\
0 & \omega_{t+1}
\end{pmatrix}\,.$$
Hence,
\begin{align*}
\ & W_v^1 \left(\omega_n \begin{pmatrix}
g_2 & X_1 & 0\\
0 & I_{2t+2-n} & 0\\
0 & 0 & I_{n-t-1}
\end{pmatrix}\omega_{n,n-t-1} \alpha^{t+1}\right)\\
= \ &  W_v^2 \left(\omega_n \begin{pmatrix}
g_2 & X_1 & 0\\
0 & I_{2t+2-n} & 0\\
0 & 0 & I_{n-t-1}
\end{pmatrix}\omega_{n,n-t-1} \alpha^{t+1}\right)\,,
\end{align*}
where $g_2 = \omega_{n-t-1} g$, $X_1=\omega_{n-t-1} X \omega_{2t+2-n}$. 

Let $X_v^i= \rho(\alpha^{t+1}) W^i_{v}$. Then
\begin{align*}
& X_v^1 \left(\omega_n \begin{pmatrix}
g_2 & X_1 & 0\\
0 & I_{2t+2-n} & 0\\
0 & 0 & I_{n-t-1}
\end{pmatrix}\omega_{n,n-t-1}\right)\\
= \ & X_v^2 \left(\omega_n \begin{pmatrix}
g_2 & X_1 & 0\\
0 & I_{2t+2-n} & 0\\
0 & 0 & I_{n-t-1}
\end{pmatrix}\omega_{n,n-t-1}\right)\,.
\end{align*}
Recall that $\wt{X^i_v}(g)=X^i_v(\omega_n {}^t g^{-1})$. 
Then, 
\begin{align*}
& \wt{X_v^1} \left(\begin{pmatrix}
g_3 & 0 & 0\\
X_2 & I_{2t+2-n} & 0\\
0 & 0 & I_{n-t-1}
\end{pmatrix}\omega_{n,n-t-1}\right)\\
= \ & \wt{X_v^2} \left(\begin{pmatrix}
g_3 & 0 & 0\\
X_2 & I_{2t+2-n} & 0\\
0 & 0 & I_{n-t-1}
\end{pmatrix}\omega_{n,n-t-1}\right)\,,
\end{align*}
where $g_3 = \omega_{n-t-1} {}^t g^{-1}$, $X_2=-\omega_{2t+2-n} {}^t X \omega_{n-t-1} g_1$. 

Therefore, 
\begin{align*}
& \wt{X_v^1} \left(\begin{pmatrix}
g & 0 & 0\\
X & I_{2t+2-n} & 0\\
0 & 0 & I_{n-t-1}
\end{pmatrix}\omega_{n,n-t-1}\right)\\
= \ & \wt{X_v^2} \left(\begin{pmatrix}
g & 0 & 0\\
X & I_{2t+2-n} & 0\\
0 & 0 & I_{n-t-1}
\end{pmatrix}\omega_{n,n-t-1}\right)\,,
\end{align*}
for all $X \in M_{(2t+2-n) \times (n-t-1)}(F)$, all $g \in G_{n-t-1}$, and all $v \in V_0$. Then, by the definition of the zeta integral $\Psi$ in \eqref{zeta}, 
we have the following equality:
\begin{align*}
& \Psi(1-s,\rho(\omega_{n,n-t-1})(\wt{X^1_v}), \wt{W_{\tau}}; 2t+2-n) \\
= \ & \Psi(1-s,\rho(\omega_{n,n-t-1})(\wt{X^2_v}), \wt{W_{\tau}}; 2t+2-n)\,,
\end{align*}
for all irreducible generic representations $\tau$ of $G_{n-t-1}$, all Whittaker functions $W_{\tau} \in \CW(\tau, \ol{\psi})$, and all $v \in V_0$. Note that the above equality first holds for $\mathrm{Re}(s) \ll 0$ and is then an identity of rational functions of $q^{-s}$ for all $\tau$, all $W_{\tau}$, and all $v \in V_0$.

Since $\pi_1$ and $\pi_2$ satisfy hypothesis $\CH_{\leq [\frac{n}{2}]}$, and $n-t-1 \leq [\frac{n}{2}]$, 
by functional equation in \eqref{fe}, we have that 
$$\Psi(s, X^1_v, W_{\tau}; n-t-2) = \Psi(s,X^2_v, W_{\tau}; n-t-2)\,,$$
for all irreducible generic representations $\tau$ of $G_{n-t-1}$, all Whittaker functions $W_{\tau} \in \CW(\tau, \ol{\psi})$, and all $v \in V_0$.
Hence, by Lemma \ref{lem4}, 
\begin{align*}
& \int_{M_{(n-t-2) \times (n-t-1)}(F)} X^1_v \begin{pmatrix}
I_{n-t-1} & 0 & 0\\
X & I_{n-t-2} & 0\\
0 & 0 & I_{2t+3-n}
\end{pmatrix} dX \\
= \ & \int_{M_{(n-t-2) \times (n-t-1)}(F)} X^2_v \begin{pmatrix}
I_{n-t-1} & 0 & 0\\
X & I_{n-t-2} & 0\\
0 & 0 & I_{2t+3-n}
\end{pmatrix} dX\,,
\end{align*}
for all $v \in V_0$. 
We claim (Lemma \ref{lem3} below) that this identity implies in fact
$$X^1_v(I_n)=X^2_v(I_n)\,,\, \forall v \in V_0\,.$$
Taking this for granted at the moment we finish the proof. Indeed, we have then
$$W^1_v(\alpha^{t+1})=W^2_v(\alpha^{t+1})\,,\, \forall v \in V_0\,.$$
Therefore by Lemma \ref{lem5}, $\pi_1$ and $\pi_2$ agree at height $t+1$. 
This concludes the proof of Proposition \ref{prop3}. \qed

In the rest of this paper we establish our claim, that is, we prove the following lemma.
We remark that in the case $t=n-2$, considered in \cite{Ch06}, there is no need of the following lemma, since $n-t-2=0$ when $t=n-2$.

\begin{lem}\label{lem3}
Recall that $X_v^i= \rho(\alpha^{t+1}) W^i_{v}$, $i=1,2$. If 
\begin{align}\label{lem3equ1}
\begin{split}
& \int_{M_{(n-t-2) \times (n-t-1)}(F)} X^1_v \begin{pmatrix}
I_{n-t-1} & 0 & 0\\
X & I_{n-t-2} & 0\\
0 & 0 & I_{2t+3-n}
\end{pmatrix} dX \\
= \ & \int_{M_{(n-t-2) \times (n-t-1)}(F)} X^2_v \begin{pmatrix}
I_{n-t-1} & 0 & 0\\
X & I_{n-t-2} & 0\\
0 & 0 & I_{2t+3-n}
\end{pmatrix} dX\,,
\end{split}
\end{align} 
for all $v \in V_0$, 
then $X^1_v(I_n)=X^2_v(I_n)$, for all $v \in V_0$.
\end{lem}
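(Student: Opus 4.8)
The plan is to exploit the hypothesis \eqref{lem3equ1} in the strong form ``for all $v\in V_0$'': this whole family of identities is stable under a certain amount of right translation, and feeding that freedom back through the Rankin--Selberg functional equation \eqref{fe} and Lemma \ref{lem4} will let us shrink the domain of integration one step at a time until no integration remains. Throughout write $m=n-t-1$, so that $1\le m\le[\tfrac n2]$ and the matrix in \eqref{lem3equ1} is $\begin{pmatrix} I_m&0&0\\ X&I_{m-1}&0\\ 0&0&I_{n-2m+1}\end{pmatrix}$ with $X\in M_{(m-1)\times m}(F)$. When $m=1$ the integration is vacuous and the assertion is trivial (this is the case $t=n-2$ already treated in \cite{Ch06}), so one may assume $m\ge 2$ and proceed by descending induction on $t$.

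\textbf{Step 1 (boosting the hypothesis).} Conjugation by $\alpha^{t+1}$ is the cyclic shift of coordinate indices by $m$, and $\alpha^{t+1}P\alpha^{-(t+1)}$ is the stabilizer of the coordinate hyperplane $\langle e_j: j\neq m\rangle$. Using \eqref{sec3equ2}, it follows that for $p$ in the standard parabolic $Q\subset G_m$ with Levi $G_{m-1}\times G_1$ and for $h\in G_{m-1}$, right translation of $X^i_v$ by $\mathrm{diag}(p,h,I_{n-2m+1})$ again belongs to $\{X^i_{v'}:v'\in V_0\}$, with $v'=\rho(p'')v$ for a suitable $p''\in P$. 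Substituting these translates into \eqref{lem3equ1} and changing variables in the inner integral upgrades \eqref{lem3equ1} to
\[
\int_Y X^1_v\begin{pmatrix} g&0&0\\ Y&h&0\\ 0&0&I_{n-2m+1}\end{pmatrix} dY=\int_Y X^2_v\begin{pmatrix} g&0&0\\ Y&h&0\\ 0&0&I_{n-2m+1}\end{pmatrix} dY
\]
for all $v\in V_0$, all $g\in Q$, all $h\in G_{m-1}$, and $Y\in M_{(m-1)\times m}(F)$; since a full copy of $G_{m-1}$ sits inside the Levi of $Q$, this in particular holds with a general element of $G_{m-1}$ in the top-left $m\times m$ block.

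\textbf{Step 2 (descend one level and invoke the functional equation).} I would now repeat, one level lower, the chain of matrix manipulations used in the proof of Proposition \ref{prop3}: insert $\omega_n\omega_n=I_n$, pass to contragredient Whittaker functions via $\wt W(x)=W(\omega_n\,{}^tx^{-1})$, and use the factorization $\omega_n=\mathrm{diag}(\omega_{m},I_{t+1})\,\omega_{n,m}\,\alpha^{t+1}$ together with its analogue for $m-1$, rewriting the identity of Step 1 as an identity of integrals $\int\wt X^i_v(\,\cdot\,)$ which, by the definition \eqref{zeta}, says precisely that certain Rankin--Selberg zeta integrals for $\pi_i\times\tau$ agree for $i=1,2$, for all irreducible generic $\tau$ of $G_{m-1}$, all $W''\in\CW(\tau,\ol\psi)$, and all $v\in V_0$; the leftover compactly supported integrations are absorbed into auxiliary elements of $\CW(\pi_i,\psi)$ exactly as in the $j>0$ reduction in the proof of Lemma \ref{lem4}. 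Since $m-1<m=n-t-1\le[\tfrac n2]$, hypothesis $\CH_{m-1}$ holds, so $\gamma(s,\pi_1\times\tau,\psi)=\gamma(s,\pi_2\times\tau,\psi)$ for every such $\tau$, and \eqref{fe} turns these into the equalities $\Psi(s,X^1_v,W'';m-2)=\Psi(s,X^2_v,W'';m-2)$ for all $\tau$, $W''$, and $v$. Applying Lemma \ref{lem4} with $t$ replaced by $t+1$ and $j=m-2$ now yields the analogue of \eqref{lem3equ1} with the block one size smaller; by the inductive hypothesis this gives $X^1_v(I_n)=X^2_v(I_n)$ for all $v\in V_0$, closing the induction. (At the bottom the surviving statement is the $j=0$ case of Lemma \ref{lem4}, handled by the spectral theory of $L^2(U\backslash G^0)$ for the appropriate general linear group, as in \cite[Section 3]{H93} and \cite[Section 2]{Ch06}.)

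\textbf{Main obstacle.} The delicate part is Step 2: one must massage the extra rows of size $n-2m+1$ so that, after passing to the contragredient, the matrices fit exactly the shape of a Rankin--Selberg zeta integral in \eqref{zeta} attached to a generic representation of $G_{m-1}$, and one must verify, as in Lemma \ref{lem4}, that the auxiliary Whittaker functions produced by absorbing the leftover integrations are legitimate --- that is, that the relevant Whittaker supports are uniformly compact, so that the absorption does not destroy the ``for all $v\in V_0$'' quantifier. A secondary but essential point is the bookkeeping of indices, so that only twists by representations of $G_{m'}$ with $m'\le[\tfrac n2]$ are ever invoked and $\CH_{\le[\frac n2]}$ is exactly what is consumed.
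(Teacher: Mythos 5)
The paper's proof of Lemma \ref{lem3} is a self-contained Fourier-inversion argument: it decomposes the unipotent group $\frak X$ of lower block-triangular matrices into the root subgroups $X_a$ and $Z_b$, pairs each with a dual subgroup $Y_a$ or $T_b$ lying inside $U\cap\alpha^{t+1}P\alpha^{-(t+1)}$, and uses the translation freedom from \eqref{sec3equ2} together with $\psi_U$-equivariance to introduce an oscillatory factor on one root subgroup at a time, stripping it away by Fourier inversion in three stages. It never re-enters the Rankin--Selberg functional equation. Your proposal is a genuinely different route --- you want to feed the integrated identity \eqref{lem3equ1} back through the functional equation with twists by $G_{m-1}$ --- and I do not think it can be made to work as written, for the following reasons.

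First, Step~2 is not carried out. You say you ``would now repeat, one level lower, the chain of matrix manipulations used in the proof of Proposition \ref{prop3},'' but the input shapes do not match: the chain in Proposition \ref{prop3} begins from an \emph{unintegrated} identity of $W^i_v$ on block matrices of shape $\begin{pmatrix} I_{n-t-1}&0&0\\0&I_{2t+2-n}&0\\0&X&g\end{pmatrix}$ with $g\in G_{n-t-1}$ sitting in the bottom-right corner, whereas your boosted Step~1 identity is an \emph{already integrated} statement about $X^i_v$ on matrices $\begin{pmatrix} g&0&0\\Y&h&0\\0&0&I_{n-2m+1}\end{pmatrix}$ with $g\in Q\subset G_m$ in the top-left, $h\in G_{m-1}$ in the middle, and the integration variable $Y$ occupying the $(2,1)$-block. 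After $\omega_n$-conjugation and passing to the contragredient, this does not land on the Rankin--Selberg shape $\begin{pmatrix} g'&0&0\\X'&I_j&0\\0&0&I_{k+1}\end{pmatrix}$ with $g'\in G_{m-1}$ in the top-left that \eqref{zeta} requires; the ``$G_{m-1}$'' factor $h$ ends up in an interior block, and the size of the $Y$-block, $(m-1)\times m$, does not match $M_{k\times(m-1)}$ for any $k\le n-m$ in general. Your ``main obstacle'' paragraph acknowledges precisely this but does not resolve it, and I believe it is an actual obstruction, not merely tedious bookkeeping.

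Second, the inductive structure is incoherent. Lemma \ref{lem3} is a statement about $X^i_v=\rho(\alpha^{t+1})W^i_v$, with $t$ built into the definition of $X^i_v$. ``Descending induction on $t$'' with base case $t=n-2$ would give, as inductive hypothesis at level $t+1$, a statement about $\rho(\alpha^{t+2})W^i_v$; but the identity you propose to obtain at the end of Step~2, ``the analogue of \eqref{lem3equ1} with the block one size smaller,'' is a statement about the \emph{same} function $\rho(\alpha^{t+1})W^i_v$ integrated over a different block. The inductive hypothesis therefore does not apply. If instead you intend to iterate Steps~1--2 internally rather than appeal to an external hypothesis, you must then re-establish the boosting of Step~1 at the new block shape (whose stabilizer inside $\alpha^{t+1}P\alpha^{-(t+1)}$ is different), and again verify the matrix-shape matching; none of this is done.

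The one correct observation in your proposal --- that the hypothesis ``for all $v\in V_0$'' is stable under right translation by $\alpha^{t+1}P\alpha^{-(t+1)}$ via \eqref{sec3equ2} --- is exactly the mechanism the paper exploits. But the paper translates by the abelian \emph{unipotent} subgroups $Y_a$ and $T_b$ dual to the root subgroups $X_a$, $Z_b$ inside $\frak X$, turning the freedom into Fourier characters and inverting; you translate by Levi-type elements $\mathrm{diag}(p,h,I)$ and try to recreate a zeta integral, which is where the construction breaks down.
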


\begin{proof}
Since $X_v^i= \rho(\alpha^{t+1}) W^i_{v}$, by \eqref{sec3equ2}, 
equality \eqref{lem3equ1} implies that 
\begin{align}\label{lem3equ2}
\begin{split}
& \int_{M_{(n-t-2) \times (n-t-1)}(F)} X^1_v \left(u \begin{pmatrix}
I_{n-t-1} & 0 & 0\\
X & I_{n-t-2} & 0\\
0 & 0 & I_{2t+3-n}
\end{pmatrix} p\right) dX \\
= \ & \int_{M_{(n-t-2) \times (n-t-1)}(F)} X^2_v \left(u \begin{pmatrix}
I_{n-t-1} & 0 & 0\\
X & I_{n-t-2} & 0\\
0 & 0 & I_{2t+3-n}
\end{pmatrix} p\right) dX\,,
\end{split}
\end{align} 
for all $u \in U$, all $p \in \alpha^{t+1} P (\alpha^{t+1})^{-1}$, and all $v \in V_0$. 
Recall that 
$$\alpha^{t+1} = \begin{pmatrix}
0 & I_{n-t-1}\\
I_{t+1} & 0
\end{pmatrix}\,.$$
Hence the $(n-t-1)$-th row of any $p$ in $\alpha^{t+1} P (\alpha^{t+1})^{-1}$ has the form $(0, \dots, 0, a, 0, \ldots, 0)$ with $a \neq 0$ in the $(n-t-1)$-th column. 
Conversely, this condition characterizes the elements of 
$\alpha^{t+1} P (\alpha^{t+1})^{-1}$.
We will use the relation \eqref{lem3equ2} only for  $p\in U\cap \alpha^{t+1} P (\alpha^{t+1})^{-1}$. 

We denote by $\xi_{i,j}$ the matrix whose only non-zero entry 
is $1$ in the $i$-th row and $j$-th column. Thus
\[ \xi_{i,j} \xi_{j',k}= \delta _{j,j'} \xi_{i,k} \,.\]
Given a root $\alpha$ (positive or negative) we denote by $X_\alpha$ the corresponding root subgroup. Thus if $\alpha=e_i-e_j$, for any $a\in F$, the element $I_n+ a \xi_{i,j}$ is in $X_\alpha$.

Set 
\[\frak{X}= \left\{\left( \begin{array}{c c c} I_{n-t-1} & 0 & 0 \\ X & I_{n-t-2} \\ 0&0& I_{2 t + 3 -n} \end{array}\right)\,,\,
X \in M_{(n-t-2) \times (n-t-1)}(F) \right\}\,.\]
The group $\frak{X}$ is abelian and is the direct product of the groups $X_{e_a-e_b}$ with
\[ n-t\leq a \leq 2(n-t)-3\,,\, 1 \leq b\leq n -t-1 \,.\]
For such a pair $(a,b)$ we have either
\[ b\leq a-(n-t)+1 \]
or
\[ a \le b + n-t-2 \,.\]
We then define subgroups of $\frak{X}$ as follows.
For $n -t \leq a \leq 2(n-t)-3$, we define the following subgroup of $\frak{X}$:
\[ X_a= \prod_{1 \leq b \leq a -(n-t) +1} X_{e_a-e_b}\,.\]
We also define a subgroup of $U$  as follows. 
\[  Y_a = \prod_{1 \leq b \leq a -(n-t) +1} X_{e_b -e _{a+1}} \,.\]
We can identify $Y_a$ with the dual of $X_a$ as follows: if for $X\in X_a$, $Y\in Y_a$, write
\begin{align*}
X =\ & I_n+ \sum _{1 \leq b \leq a -(n-t) +1} \xi_{a,b} x_b \,,\\
Y =\ & I_n + \sum_{ 1 \leq b \leq a -(n-t) +1} \xi_{b,a+1} y_b\,,
\end{align*}
then set
\[ \langle X,Y\rangle = \sum_{ 1 \leq b \leq a -(n-t) +1} x_b y_b \,.\]

We remark that $Y_a$ is contained in the subgroup $U \cap \alpha^{t+1} P
\alpha ^{-(t+1)}$. Indeed, $b$ cannot take the value $n-t-1$, otherwise, we would have $n-t-1\leq a-(n-t)+1$ or $2(n-t)-2 \leq a$, which contradicts the assumption $a\leq 2(n-t)-3$. 

For $2\leq b \leq n-t-1$, we define
\[ Z_b = \prod_{n-t\leq a \leq b+ n-t-2} X_{e_a-e_b} \,.\]
We also define a subgroup of $U$  as follows:
\[ T_b = \prod_{n-t\leq a \leq b+ n-t-2} X_{e_{b-1}-e_a} \,.\]
Again we can identify $T_b$ with the dual of $Z_b$ as follows: if for $Z\in Z_b$, $T\in T_b$, write
\begin{align*}
Z =\ & I_n+  \sum_{ n-t\leq a \leq b+ n-t-2} \xi _{a,b} z_a \,,\\
T= \ & I_n + \sum_{ n-t\leq a \leq b+ n-t-2} \xi _{b-1,a} t_a\,,
\end{align*}
then set
\[ \langle Z , T\rangle = \sum_{ n-t\leq a \leq b+ n-t-2} z_a y_a \,.\]

Since $b-1\leq n-t-2$, the $(n-t-1)$-th row of a matrix in $T_b$ has all its elements $0$ except the diagonal element equal to $1$. Thus $T_b$ is contained in $U \cap \alpha^{t+1}P \alpha^{-(t+1)}$.

The group $\frak{X}$ is the product
$$\prod_{n-t \leq a \leq 2(n-t)-3} X_{a} \prod_{2 \leq b \leq n-t-1} Z_b\,.$$
The identity \eqref{lem3equ1} can be written as follows: for all $v \in V_0$, 
\[\int_{\frak{X}} X_v^1(X) d X = \int_{\frak{X}} X_v^2 (X ) d X \,.\] 
Note that the two functions $X^i_v$ on $\frak{X}$ are smooth and compactly supported. We should keep in mind that
\[ X_{\rho(p)v}^i (X) = X_v^i( X (\alpha^{t+1} p {\alpha^{-(t+1)}} )) \,,\, \forall p \in P\,,\, \forall v \in V_0\,.\]

\textbf{First step.} We show that we have, for all $v \in V_0$, the identity
\[ \int X_v^1(X) d X = \int X_v^2 (X ) d X \, ,\]
where both integrals are over the product
$$\prod_{n-t \leq a \leq 2(n-t)-4} X_{a} \prod_{2 \leq b \leq n-t-1} Z_b\,.$$

By \eqref{lem3equ2}, for all $Y \in Y_{2(n-t)-3}=\prod _{1\leq b \leq n-t-2} X_{e_b -e _{2(n-t)-2}}$ and all $v \in V_0$, we have
 \[\int X_v^1(XY ) d X = \int X_v^2 (X Y ) d X \,,\] 
where both integrals are over the product
$$\prod_{n-t \leq a \leq 2(n-t)-3} X_{a} \prod_{2 \leq b \leq n-t-1} Z_b\,.$$  
We write
\[X = \left(\begin{array}{ccc} I_{n-t-1} & 0 & 0 \\ A & I_{ n-t-2}&0 \\ 0 &0 &I_{ 2t + 3 -n} \end{array}\right) \,,\]
\[ Y = \left(\begin{array}{ccc} I_{n-t-1} & 0 & B \\ 0 & I_{ n-t-2}&0 \\ 0 &0 &I_{ 2t + 3 -n} \end{array}\right) \,.\]
Then
\[ XY = \left(\begin{array}{ccc} I_{n-t-1} & 0 & B \\ 0 & I_{ n-t-2}& AB \\ 0 &0 &I_{ 2t + 3 -n} \end{array}\right)  \left(\begin{array}{ccc} I_{n-t-1} & 0 & 0 \\ A & I_{ n-t-2}&0 \\ 0 &0 &I_{ 2t + 3 -n} \end{array}\right) \,.\]
We must evaluate
\[ \psi_U \left(\begin{array}{ccc} I_{n-t-1} & 0 & B \\ 0 & I_{ n-t-2}& AB \\ 0 &0 &I_{ 2t + 3 -n} \end{array}\right) \,.\]
We write
\[\left(\begin{array}{ccc} 0_{n-t-1} & 0 & 0 \\ A & 0_{ n-t-2}&0 \\ 0 &0 &0_{ 2t + 3 -n} \end{array}\right)
= \sum _{n-t\leq a \leq 2(n-t)-3\,,\, 1 \leq b\leq n-t-1} \xi_{a,b} x _{a,b} \,.\]
By abuse of notations, we write this in the form
\[A = \sum _{n-t\leq a \leq 2(n-t)-3\,,\, 1 \leq b\leq n-t-1} \xi_{a,b} x _{a,b} \,.\]
Similarly,  
\[ B = \sum _{1\leq j \leq n-t-2} \xi_{j, {2(n-t)-2}}y_j \,.\]
Hence
\[ AB = \sum _{n-t\leq a \leq 2(n-t)-3} \xi_{a, 2(n-t)-2} \left( \sum_{1\leq j\leq n-t-2} x_{a,j} y_j\right) \,,\]
and
\begin{align*}
& \psi_U \left(\begin{array}{ccc} I_{n-t-1} & 0 & B \\ 0 & I_{ n-t-2}& AB \\ 0 &0 &I_{ 2t + 3 -n} \end{array}\right)  \\
= \ & \psi\left(\sum_{1\leq j\leq n-t-2} x_{2(n-t)-3,j} y_j \right)\\
= \ & \psi(\langle X^{2(n-t)-3}, Y\rangle)\,, 
\end{align*}
where $ X^{2(n-t)-3}$ is the projection of $X$ on the subgroup $X_{2(n-t)-3}$. 
Hence we have, for all $Y \in Y_{2(n-t)-3}$ and all $v \in V_0$,
\[ \int X_v^1 (X)  \psi(\langle X^{2(n-t)-3}, Y\rangle) d X =\int X_v^2 (X) \psi(\langle X^{2(n-t)-3}, Y\rangle)  d X \,,\]
where both integrals are over the product
$$\prod_{n-t \leq a \leq 2(n-t)-3} X_{a} \prod_{2 \leq b \leq n-t-1} Z_b\,.$$
Applying Fourier inversion formula on the group $X_{2(n-t)-3}$, we obtain our assertion.

\textbf{Second step.}
 Assume that for $k$ with $n-t\leq k\leq 2(n-t)-4$ and for all $v \in V_0$, we have established the identity 
 \[ \int X^1_v(X) d X = \int X^2_v(X) d X\,,\]
where both integrals are over the product
\[ \prod_{n-t \leq a \leq k\,,\, 2 \leq b \leq n-t-1} X_a Z_ b\,.\]
We show that for all $v \in V_0$, we have the identity
 \[ \int X^1_v(X) d X = \int X^2_v(X) d X\,,\]
where both integrals are over the product
\[ \prod_{n-t \leq a \leq k-1\,,\, 2 \leq b \leq n-t-1} X_a Z_ b\,.\]
 
By \eqref{lem3equ2}, for all $v \in V_0$ and all $Y\in Y_k$, we have
\[ \int X^1_v(X Y) d X = \int X^2_v(X Y) d X\,,\]
where both integrals are over the product
\[ \prod_{n-t \leq a \leq k\,,\, 2 \leq b \leq n-t-1} X_a Z_ b\,.\]
Recall that
\begin{align*}
X_k = \ & \prod _{1\leq b \leq k-(n-t) + 1 } X_{e_k-e_b}\,,\\
Y_k = \ & \prod _{1 \le b \leq k -(n-t) +1} X _{e_b -e_{k+1}}\,.
\end{align*}
Hence $ n-t+1 \leq k+1 \leq 2(n-t)-3$ and $b\leq n-t-3$. Thus we may write $Y$ as the matrix
\[Y= \left(\begin{array}{ccc} I_{n-t-1} & B & 0 \\ 0 & I_{ n-t-2}&0 \\ 0 &0 &I_{ 2t + 3 -n} \end{array}\right) \,.\]
We still write
\[X = \left(\begin{array}{ccc} I_{n-t-1} & 0 & 0 \\ A & I_{ n-t-2}&0 \\ 0 &0 &I_{ 2t + 3 -n} \end{array}\right) \,.\]
Then
\[ XY = \left(\begin{array}{ccc} I_{n-t-1} & B & 0 \\ A & I_{ n-t-2}+ AB&0 \\ 0 &0 &I_{ 2t + 3 -n} \end{array}\right) \,.\]

To continue we must check that the matrix $ I_{ n-t-2}+ AB$ is invertible.  Now again by abuse of notations as in the First step, write
\[ A = \sum_{1\leq b\leq a -(n-t) +1\,,\, n-t \leq a \leq k} \xi_{ a,b} x_{a,b}+ \sum_{2 \leq b \leq n-t-1\,,\, n-t \leq a \leq b+ n-t-2} \xi_{a,b} z_{a,b}\,,\]
and
\[ B= \sum _{ 1 \leq j \leq k-(n-t)+1}\xi_{j, k+1} y_j \,.\]
In the product $AB$, the contribution of the first sum in $A$ is 
$$\sum _{n-t \leq a \leq k} \xi_{a, k+1} \left( \sum _{1 \leq j\leq k -(n-t)+1}  x_{a,j} y_j \right)\,.$$
The contribution to $AB$ of the second sum in $A$ is itself a sum of terms of the form
$$\xi_{a,k+1} z_{a,j} y_j\,,$$
with 
$$n-t \leq a \leq j+n-t-2, 2 \leq j\leq k-(n-t)+1\,,$$
inequalities which imply that $a \leq k-1$.
We conclude that
\[ AB = \sum _{ n-t\leq a \leq k} \xi_{a,k+1} m_a\,, \]
where
\[ m_k = \sum _{1 \leq j\leq k -(n-t)+1}  x_{k,j} y_j= \langle X^k,Y\rangle\,,\]
and $X^k$ is the projection of $X$ on the group $X_k$. 
Thus $I_{ n-t-2}+ AB$ is invertible and in fact, since $AB$ has only one non-zero column, 
\[ (I_{n-t-2}+ AB)^{-1} = I_{n-t-2}- AB\,.\]

We introduce the matrix
\[ \widetilde{A} = (I_{n-t-2} +A B)^{-1}A = (I_{n-t-2} -AB)A\,.\]
We compute $ABA$. The first sum in $A$ does not contribute to the product of $AB$ by $A$. The second sum contributes 
$$\xi_{a,b} z_{k+1,b}m_a\,,$$
where the sum is for 
$$n-t \leq a \leq k, 2 \leq b \leq n-t-1, n-t \leq k+1 \leq b+n-t-2\,.$$
Recall that $n-t \leq k$. So, the range of $b$ is 
$$k-(n-t)+3 \leq b \leq n-t-1\,.$$
Thus
\[ ABA = \sum_{n-t \leq a \leq k\,,\, k-(n-t)+3 \leq b \leq n-t-1} \xi_{a,b} z_{k+1,b}m_a\,.\]
The pairs $(a,b)$ which appear satisfy the inequalities
\[2 \leq b \leq n-t-1 \,,\,  n-t \leq a \leq b + n-t -2\,.\]
We conclude that
\begin{align*}
& \widetilde{A} \\
= \ & A -A BA \\
= \ & \sum_{1\leq b\leq a -(n-t) +1\,,\, n-t \leq a \leq k} \xi_{ a,b} x_{a,b}+ \sum_{2 \leq b \leq n-t-1\,,\, n-t \leq a \leq b+ n-t-2} \xi_{a,b} z_{a,b}'\,.
\end{align*}
In words, $\widetilde{A}$ has the same shape as $A$ and the same $x_{a,b}$ coordinates. 
Hence the matrix
\[ \widetilde{X}= \left(\begin{array}{ccc} I_{n-t-1} & 0 & 0 \\ \widetilde{A} & I_{ n-t-2}&0 \\ 0 &0 &I_{ 2t + 3 -n} \end{array}\right) \]
is in the same group as the matrix $X$.
Also
\[ \widetilde{A} B = \sum _{ n-t\leq a \leq k} \xi_{a,k+1} \widetilde{m}_a\,,\]
and
\[ \widetilde{X}^k = X^k, \widetilde{m}_k = \langle\widetilde{X}^k,Y\rangle =\langle X^k,Y\rangle \,.\] 

On the other hand 
the matrix $BA$ is the sum of
\[ \xi_{b,b'} z_{k+1,b'} y_b \]
with 
\[ 1 \leq b \leq k-(n-t)+1\,,\, k-(n-t)+3 \leq b' \leq n-t-1\,.\]
The inequalities imply
\[ b + (n-t) \leq k+1 \leq b' + n-t -2 \]
or
\[ b \leq b'-2 \,.\]
Thus $BA$ is an upper triangular matrix  with $0$ entries in the diagonal and just above the diagonal. 
In particular, $I_{n-t-1}- BA$ is invertible. The same remarks apply to the matrix  $B\wt{A}$ and $I_{n-t-1}- B\wt{A}$.

Thus we can continue our computation
\begin{align*}
\ & XY \\
= \ & \left(\begin{array}{ccc} I_{n-t-1} -B \widetilde{A} & B & 0 \\ 0 & I_{ n-t-2}+ AB&0 \\ 0 &0 &I_{ 2t + 3 -n} \end{array}\right) \left(\begin{array}{ccc} I_{n-t-1} & 0 & 0 \\ \widetilde{A} & I_{ n-t-2}&0 \\ 0 &0 &I_{ 2t + 3 -n} \end{array}\right)\,,
\end{align*}
and we have
\begin{align*}
\ & \psi_U \left(\begin{array}{ccc} I_{n-t-1} -B \widetilde{A} & B & 0 \\ 0 & I_{ n-t-2}+ AB&0 \\ 0 &0 &I_{ 2t + 3 -n} \end{array}\right) \\
= \ & \psi_{U_{n-t-2}} ( I_{n-t-2} +AB) \\
= \ & \psi (\langle X^k,Y\rangle) \\
= \ & \psi (\langle \widetilde{X}^k,Y\rangle ) \,.
\end{align*}
Hence our identity reads
\[ \int X_v^1 (\widetilde{X} ) \psi (\langle \widetilde{X}^k,Y\rangle ) d X = \int X_v^2 (\widetilde{X} ) \psi (\langle \widetilde{X}^k,Y\rangle ) d X\,,\, \forall v \in V_0\,, \]
where both integrals are over the product
\[ \prod_{n-t \leq a \leq k\,,\, 2 \leq b \leq n-t-1} X_a Z_ b\,.\]

We want to use $\widetilde{X}$ as the variable of integration. Because $AB$ and $BA$ are nilpotent
we have
\[ d \widetilde{X} =\lvert p(X)\rvert d X \]
and
\[ d  X = \vert \widetilde{p} (\widetilde{X})\rvert d \widetilde {X} \]
where $p$ and $\widetilde{p}$ are polynomials in the entries of $X$ and $\widetilde{X}$ respectively.
Then
\[ \vert \widetilde{p}(\widetilde{X}) p(X)\rvert =1 \,.\]
Since $\widetilde{X}$ is a polynomial function of $X$ we see that $p$ is a constant $c>0$ and so $d \widetilde{X} =c d X$. In fact $c=1$ but we do not need this fact.
Hence our identity reads
\[ \int X_v^1 (\widetilde{X} ) \psi (\langle \widetilde{X}^k,Y\rangle ) d \widetilde{X} = \int X_v^2 (\widetilde{X} ) \psi (\langle \widetilde{X}^k,Y\rangle ) d \widetilde{X}\,,\, \forall v \in V_0\,,  \]
where both integrals are over the product
\[ \prod_{n-t \leq a \leq k\,,\, 2 \leq b \leq n-t-1} X_a Z_ b\,.\]

Applying Fourier inversion formula on the group $X_k$, we get,
 for all $v \in V_0$, the equality
\[ \int  X_v^1 (X) d X = \int  X_v^2 (X) d X\,, \]
where both integrals are over the product
\[ \prod_{n -t \leq a \leq k-1 \,,\,2 \leq b \leq n-t-1}  X_a Z_b \,.\]

\textbf{Third step.} Applying descending induction on $k$ we arrive at
\[ \int_{\prod_{2 \leq b \leq n-t-1}  Z_b} X_v^1 (Z) d Z = \int_{\prod_{2 \leq b \leq n-t-1}  Z_b} X_v^2 (Z) d Z\,,\, \forall v \in V_0\,. \]
We prove now that for $2\leq k \leq n-t-1$, if we have
\[ \int_{\prod_{ k \leq b \leq n-t-1}Z_b} X_v^1 (Z) d Z = \int_{\prod_{ k \leq b \leq n-t-1}Z_b} X_v^2 (Z) d Z\,,\, \forall v \in V_0\,,\]
then we have 
\[ \int_{\prod_{k+1 \leq b \leq n-t -1} Z_b} X_v^1 (Z) d Z = \int_{\prod_{k+1 \leq b \leq n-t -1} Z_b} X_v^2 (Z) d Z\,,\, \forall v \in V_0\,. \]
By ascending induction this will establish our contention.

By \eqref{lem3equ2}, we have for all $T\in T_k$ and all $v \in V_0$, 
\[ \int_{\prod_{k \leq b \leq n-t-1} Z_b} X_v^1 ( T Z T^{-1}) d Z = \int_{\prod_{k \leq b \leq n-t-1} Z_b} X_v^2 (TZT^{-1}) d Z\,.\]
We write
\[ Z = \left(\begin{array}{ccc} I_{n-t-1} & 0 & 0 \\ A & I_{n-t-2} & 0 \\0 &0 & I_{2t + 3 -n} \end{array}\right)\,,\]
and
\[ T= \left(\begin{array}{ccc} I_{n-t-1} & B & 0 \\ 0 & I_{n-t-2} & 0 \\0 &0 & I_{2t + 3 -n} \end{array}\right)\,. \]
Again by abuse of notations as in previous steps, write
\begin{align*}
A=\ & \sum_{k \leq b\leq n-t-1\,,\, n-t \leq a \leq b+ n-t-2} \xi_{a,b} z_{a,b} \,,\\
B=\ & \sum_{n-t\leq j\leq k+n-t-2} \xi_{k-1, j} t _j \,.
\end{align*}
Since $b\geq k$, the product $ \xi_{a,b} \xi_{k-1, j}$ is always $0$ and so $AB=0$. On the other hand
\[ BA= \sum_{ k \leq b\leq n-t-1} \xi_{k-1, b} \left( \sum _{ n-t\leq j\leq k+n-t-2}z_{j,b} t_j \right)\,.\] 
So $BA$ is upper triangular with zero diagonal. The only nonzero entry just above the diagonal is the coefficient of $\xi_{k-1,k}$ which is 
$$\sum _{ n-t\leq j\leq k+n-t-2}z_{j,k} t_j = \langle Z^k, T\rangle\,,$$
where $Z^k$ is the projection of $Z$ on the group $Z_k$. 
Thus
\[  \left(\begin{array}{ccc} I_{n-t-1}+BA  & 0 & 0 \\ 0 & I_{n-t-2} & 0 \\0 &0 & I_{2t + 3 -n} \end{array}\right)\in U\,, \]
and
\[ \psi_U\left(\begin{array}{ccc} I_{n-t-1}+BA  & 0 & 0 \\ 0 & I_{n-t-2} & 0 \\0 &0 & I_{2t + 3 -n} \end{array}\right) = \psi( \langle Z^k, T\rangle) \,.\]
Using the fact that $AB=0$, we find
\begin{align*}
\ & TZT^{-1}\\
 =\ & \left(\begin{array}{ccc} I_{n-t-1}+BA  & 0 & 0 \\ 0 & I_{n-t-2} & 0 \\0 &0 & I_{2t + 3 -n} \end{array}\right)\left(\begin{array}{ccc} I_{n-t-1} & 0 & 0 \\ A & I_{n-t-2} & 0 \\0 &0 & I_{2t + 3 -n} \end{array}\right)\,,
\end{align*}
and our identity reads, for all $T\in T_k$ and all $v \in V_0$, 
\[ \int_{\prod_{k\leq b \leq n-t-1} Z_b} X_v^1(Z)  \psi( \langle Z^k, T\rangle) d Z = \int_{\prod_{k\leq b \leq n-t-1} Z_b} X_v^2(Z)  \psi( \langle Z^k, T\rangle) d Z\,. \]
Applying Fourier inversion formula on the group $Z_k$, we conclude that 
\[ \int_{\prod_{k+1 \leq b \leq n-t-1} Z_b} X_v^1(Z) d Z = \int_{\prod_{k+1 \leq b \leq n-t-1} Z_b} X_v^2(Z) d Z\,,\, \forall v \in V_0\,. \]

This concludes the proof of the lemma. 
\end{proof}

\end{document}